\def\N{\mathbb{N}}
\def\f{\bold{f}}
\def\g{\bold{g}}
\def\SS{\mathbb{S}}
\def\P{\mathbb{P}}
\def\E{\mathbb{E}}
\def\i{\mathbf{i}}
\def\RRR{\mathbb{R}}
\def\t{\textrm}
\def\w{\widetilde}
\def\b{\mathbf}
\def\ind{{\mathchoice {\rm 1\mskip-4mu l} {\rm 1\mskip-4mu l}
{\rm 1\mskip-4.5mu l} {\rm 1\mskip-5mu l}}}
\def\d{\text{d}}
\newcommand {\GG}{\mathbb{G}}
\newcommand {\TT}{\mathbb{T}}
\newcommand{\be} {\begin{equation}}
\newcommand{\ee} {\end{equation}}
\newcommand{\bea} {\begin{eqnarray}}
\newcommand{\eea} {\end{eqnarray}}
\newcommand{\Bea} {\begin{eqnarray*}}
\newcommand{\Eea} {\end{eqnarray*}}
\newtheorem{Thm}{Theorem}
\newtheorem*{Thm*}{Theorem}
\newtheorem{Lem}{Lemma}
\newtheorem{Pte}{Proposition}
\theoremstyle{definition} 
\theoremstyle{definition} \newtheorem*{key}{Key words}
\theoremstyle{definition} \newtheorem*{ams}{A.M.S. Classification}
\theoremstyle{remark}\newtheorem*{ex}{Example}
\theoremstyle{remark}
\theoremstyle{remark}\newtheorem{fig}{Figure}
\begin{document}
\title{Cell contamination and branching process in random environment with immigration}
\author{Vincent Bansaye \footnote{Laboratoire de Probabilités et Modèles Aléatoires. Université Pierre et Marie Curie et C.N.R.S. UMR 7599. 175, rue du
Chevaleret, 75 013 Paris, France.
$\newline$
\emph{e-mail} :  vincent.bansaye@upmc.fr }}
\maketitle \vspace{3cm}
\begin{abstract}
We consider  a branching model for a population of dividing cells infected by  parasites. Each cell receives parasites
by inheritance from its mother cell and independent contamination from outside the cell population. Parasites multiply randomly inside the cell and are shared  randomly between the two daughter cells when the cell divides. The law of the number of parasites which contaminate a given cell depends only on whether  the cell is already infected or not.
We determine first the asymptotic behavior of    branching processes in random environment with state dependent immigration, which gives the convergence in distribution of the number of parasites in a cell line. We then derive  a law of large numbers for the asymptotic proportions of cells with a given number of parasites. The main tools are  branching processes in random environment  and  laws of large numbers for Markov tree.
\end{abstract}
\begin{key}  Branching processes in random environment with immigration (IBPRE). Markov chain indexed by a tree.
 Empirical measures. Renewal theorem.
\end{key}
\begin{ams}60J80, 60J85, 60K37, 92C37, 92D25, 92D30.
\end{ams}

\section{Introduction}
We consider the following model for cell division with parasite infection and state dependent
contamination. The cell population starts from one single cell and divides in discrete time. At each generation, 
\\
(i) the parasites multiply randomly inside the cells,\\
(ii) each cell is contaminated by a random number of parasites which come from outside the cell population, \\
(iii)  each  cell divides into two daughter cells and
the  parasites are  shared randomly   into the two daughter cells.  \\

It is convenient to distinguish a first daughter cell called $0$ and a second one called $1$.
 We denote by  $\TT=\cup_{n \in \N} \{0,1\}^{n}$ the binary genealogical tree of the cell population, by $\GG_n$
 the set of cells in generation
$n$  and  by $Z_{\mathbf{i}}$ the number of parasites of  cell
 $\b{i}\in \TT$. We  write then $\b{i}0$ and $\b{i}1$ the two daughter cells of the cell $\b{i}\in\TT$. \\
 
 First, we describe by a branching process the random multiplication  and sharing of parasites in the cell, i.e. this branching process combines (i) and (iii). Second, we describe the random contamination (ii) by  immigration. Finally, we combine both in an i.i.d. manner  to fully describe the model. \\

\underline{I Parasite infection and cell division}
For every cell, we choose randomly  a mechanism for   multiplication
of the  parasites inside and  sharing of their
offspring when the cell divides. This mechanism is independent  and identically distributed for every cell. Its distribution is specified
by a random couple probability generating function (p.g.f)
 $\f$. This means that $\f$ is a.s. the p.g.f of a pair of random variables taking values in $\N$. \\
 
 More precisely let $(f_{\i})_{\i \in \TT}$  be a sequence of i.i.d. couple p.g.f  distributed
 as $\f$. For each  cell $\i$, $f_{\i}$  gives  
  the  reproduction law and sharing of the  offspring of  its parasites in the following way. For every $\i\in\TT$,   let
$(X^{(0)}_k(\i), X^{(1)}_k(\i))_{k\in\N}$ be a sequence of r.v. such that conditionally
on $\bold{f}_{\i}=\bold{g}$, $(X^{(0)}_k(\i), X^{(1)}_k(\i) )_{k\in\N}$ are i.i.d.  with common
couple p.g.f $\bold{g}$:
$$\forall \i\in\TT, \ \forall k \in \N, \ \forall s,t \in [0,1], \quad \E(s^{X^{(0)}_k(\i)}t^{X^{(1)}_k(\i)} \ \vert \ \f_{\i}=\bold{g})=\bold{g}(s,t).$$
Then, in each generation, each parasite $k$
of the cell $\i$ gives birth
 to $X^{(0)}_k (\i)+X^{(1)}_k(\i)$ children, $X^{(0)}_k(\i)$ of which  go
into the first daughter cell  and $X^{(1)}_k(\i)$ of which go into the second one, when the cell divides.
This is a more general model for parasite infection and cell division than the model
studied in \cite{vbk}, where there was no random environment ($\f$ was deterministic) and the the total number of parasites was a Galton Watson process. See $\cite{kim}$ for the original model in continuous time. \\

Our model   includes also the two following natural models, with random binomial repartition
of parasites. Let $Z$ be a random variable in $\N$ and
$(P_{\i})_{\i \in \TT}$ be i.i.d. random variable in $[0,1]$. In each generation, every parasite
 multiplies independently with the same reproduction law $Z$. Thus parasites follow a Galton Watson process. Moreover
 $P_{\i}$ gives the
mean fraction of parasites of the cell $\i$ which goes into the first daughter cell
when the cell divides. More precisely, conditionally on $P_{\i}=p$,
every parasite of the mother cell $\i$  chooses independently the first  daughter cell with  probability
 $p$ (and  the second one with probability $1-p$). \\
 It contains also the following model. Every parasite
gives  birth independently to a random cluster of parasites of size $Z$ and conditionally on $P_{\i}=p$,
every cluster of parasite  goes independently into the  first cell with
probability $p$  (and into the second one with probability $1-p$). \\

We want to take into account asymmetric repartition  of parasites and do not make any assumption about $\f$. Indeed  unequal sharing have been observed  when the cell divides, see e.g. experiments of M. de
Paepe,  G. Paul and F.  Taddei at TaMaRa's  Laboratory (Hôpital Necker, Paris) who have
 infected
the bacteria $\emph{E. Coli}$ with a lysogen
bacteriophage M13 \cite{tad}.
In  Section \ref{wcont}, we consider this model where a cell receive parasites only by 
inheritance from its mother cell. We  determine when 
 the number of
infected  cells becomes negligible compared to the number of cells
when the generation tends to infinity.\\

 \underline{II State dependent contamination}
In each generation, each cell may be contaminated  by a random
 number of parasites which also multiply randomly and are shared randomly between the two daughter
cells. This contamination  depends only on whether  the cell 
already contains parasites or not. \\

More formally, if a cell $\i$ contains $x$ parasites,
the contamination brings   $Y^{(0)}_x$ parasites to the first daughter cell of $\i$ and
$Y^{(1)}_x$ to the second one, where
$$\forall x\geq 1, \quad Y_1:\stackrel{d}{=}Y^{(0)}_x\stackrel{d}{=}Y^{(1)}_x, \qquad Y_0:\stackrel{d}{=}Y^{(0)}_0\stackrel{d}{=}Y^{(1)}_0.$$
Moreover we assume  that contamination satisfies
\be
\label{cond}
0<\P(Y_0=0)<1, \qquad 0<\P(Y_1=0),
\ee
which means that each  non-infected cell may be contaminated with a positive probability but    the cells are not   contaminated with probability  one. \\

This model   contains the case  when the contamination is independent
of the number of parasites in the cell ($Y_0$ and $Y_1$ are identically distributed). It also takes into account the case when only non infected cells can be contaminated ($Y_1=0$ a.s.) and the case when infected cells are 'weaker' and parasites contaminate them easier ($Y_1\geq Y_0$ a.s.). For biological and technical reasons, we dot make $Y_x$ depend on $x\geq 1$. But the results given here could be generalized to the case when the contamination depends on the number of parasites $x$ inside the cells soon as $x$ is less than some fixed constant.\\

\underline{III Cell division with parasite infection and contamination} We describe now the whole model.
We start with a single cell  with $k$ parasites and denote by $\P_k$ the associated probability.
Unless otherwise  specified, we assume $k=0$. \\

For every cell $\b{i}\in\TT$, conditionally on $Z_{\mathbf{i}}=x$ and $\f_{\i}=\g$, the numbers
of parasites $(Z_{\mathbf{i}0},
Z_{\mathbf{i}1})$ of its two daughter cells   is distributed as
$$
\sum_{k=1}^x (X^{(0)}_k(\i), X^{(1)}_k(\i)) + (Y^{(0)}_x (\i),Y^{(1)}_x(\i) ),
$$
where \\

(i)  $(X_k^{(0)}(\i) , X_k^{(1)}(\i) )_{k\geq 1}$ is an i.i.d. sequence
with common  couple p.g.f  $\g$.  \\

(ii) $(Y^{(0)}_x(\i),Y^{(1)}_x(\i))$ is independent
of $(X_k^{(0)}(\i) , X_k^{(1)}(\i))_{k\geq 1}$.  \\

Moreover, $\big((X_k^{(0)}(\i), X_k^{(1)}(\i) )_{k\geq 1}, \ (Y^{(0)}_x(\i),Y^{(1)}_x(\i))_{x\geq 0}\big)$ are i.i.d. for $\i \in \TT.$

\begin{fig} Cell division with multiplication of parasites, random sharing and contamination. Each parasite gives birth to a random number of light parasites and dark parasites.  Light parasites go into the first daughter cell, dark parasites go into
the second daughter cell and square parasites contaminate the cells from outside the cell population. But light/ dark/ square  parasites then behave in the same way.
\begin{center}
\includegraphics[scale=0.65]{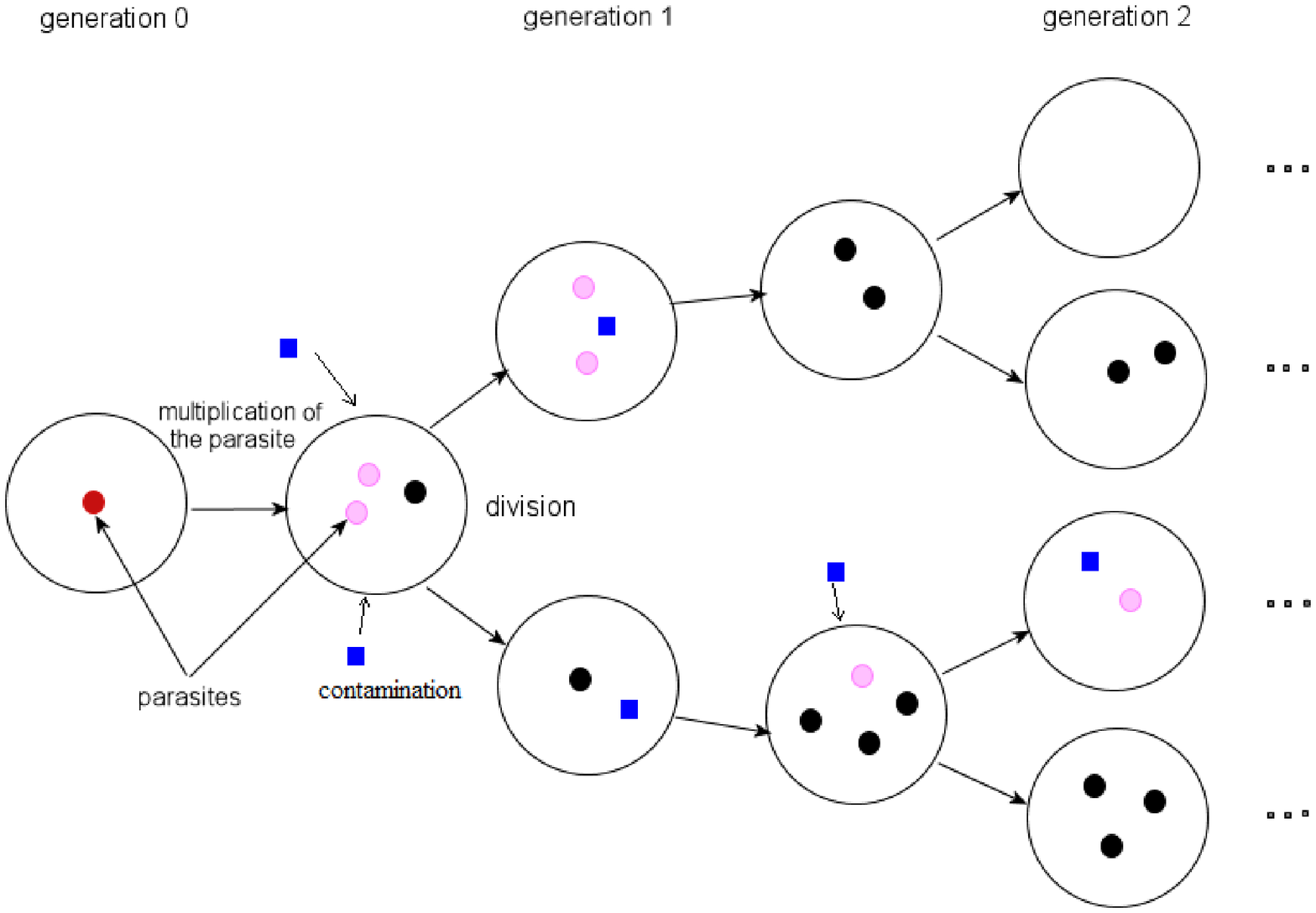}
\end{center}
\end{fig}

This model is a  Markov chain indexed by a tree. This subject has
been studied in  the literature (see e.g. \cite{atarbre, atarbres, benj}) in the symmetric
independent case. That is, $\forall (\b{i},k) \in \TT\times \N$,
$$ \P((Z_{\b{i}0}, Z_{\b{i}1})=(k_0,k_1)\mid Z_{\b{i}}=k)=\P(Z_{\b{i}0}=k_0\mid
Z_{\b{i}}=k)\P(Z_{\b{i}0}=k_1\mid Z_{\b{i}}=k).$$
But this identity does  not hold here since we are interested in unequal sharing of parasites. Guyon
\cite{guyon} proves limit theorems for  a Markov chain indexed by a binary
tree where  asymmetry and dependence are allowed.  His theorem is the key argument to prove the
convergence of asymptotic proportions of cells with a given number of parasites here.
Indeed, contamination ensures that the process which counts the number of parasites along  the random walk 
on the binary tree of the cell population is ergodic and non trivial (see
Section \ref{randline}). This
is the fundamental assumption to use Guyon's law of large numbers. Let us then introduce more precisely this process which gives the number of parasites in a random cell line.  \\

Let $(a_i)_{i \in \N}$ be an i.i.d. sequence  independent of
$(Z_{\b{i}})_{\b{i} \in  \TT}$ such that
\be \label{iid}
\P(a_1=0)=\P(a_1=1)=1/2. \ee
Denote by $f^{(0)}$ (resp $f^{(1)}$) the random p.g.f which gives the law of the
 size of the offspring of a parasite which goes in the first daughter cell (resp. in the second daughter cell): 
 $$f^{(0)}(s):=\f(s,1) \quad \t{a.s.}, \qquad f^{(1)}(t):=\f(1,t) \quad \t{a.s.}, \qquad (s,t\in [0,1]).$$
Let $f$ be the mixed generating function
of $f^{(0)}$ and $f^{(1)}$, i.e.
$$\P( f \in \d g )= \frac{\P(f^{(0)} \in  \d g) + \P(f^{(1)} \in \d g)}{2}.$$
Then
$(Z_n)_{n\in\N}=(Z_{(a_1,a_2,..a_n)})_{n \in \N}$ is a Branching
Process in Random Environment with immigration depending on the state is zero or not: the reproduction law is given by its p.g.f  $f$, the immigration law in zero is distributed as $Y_0$, and  the immigration law in $k\geq 1$ as   $Y_1$.  Thus, we first need to prove asymptotic results for this process.

\section{Main results}

Galton Watson processes with immigration are well  known (see e.g. \cite{asmu, RuL}). If the process is subcritical and the expectation of the logarithm of the immigration is finite, then it converges in distribution to a finite random variable. Otherwise it tends to infinity in probability. Key \cite{IBPRE} has obtained the analogue  result for  Branching Processes in Random Environment  with Immigration (IBPRE), in the subcritical case,  with finite expectation of the logarithm. Actually he states results for multitype IBPRE,  which have been complemented by Roitershtein \cite{Roit} who obtained a strong law of large numbers and a central limit theorem for the partial sum.\\

In Section \ref{IBPRE}, we give the asymptotic behavior of  IBPRE in the different cases, which means that we also consider the critical or supercritical case and the case when the expectation of the logarithm of the immigration is infinite.   To get these results, we use   some general statements  on Markov processes (Section \ref{MARK}), classical arguments for Galton Watson process with immigration (see
 \cite{RuL}, which was inspired from \cite{asmu}) and the tail of the time when IBPRE  returns to $0$ in the subcritical case, which is proved in  \cite{IBPRE}. \\

We can then state results about branching processes in random environment  $(Z_n)_{n\in\N}$ with immigration depending on the state is zero or not (Section \ref{randline}) using coupling arguments and Section \ref{MARK}.  This process gives the number of parasites along a random cell line. Recalling that immigration in state zero is distributed as $Y_0$ and immigration in state $k\geq 1$ is distributed as 
$Y_1$,  we prove the following expected result. \\

\begin{Thm*} \label{cvrandline}  (i) If $\E\big(\log(f'(1))\big)<0$ and
$\max(\E(\log^+(Y_i)): i=0,1)<\infty$, then there exists a finite r.v. $Z_{\infty}$ such that
for every $k \in \N$,
$Z_n$ starting from $k$ converges in distribution to  $Z_{\infty}$ as $n\rightarrow \infty$.

(ii) If $\E\big(\log(f'(1))\big)\geq 0$ or
$\max(\E(\log^+(Y_i)): i=0,1)=\infty$, then
$Z_n$ converges
in probability to $\infty$ as $n\rightarrow \infty$.
\end{Thm*}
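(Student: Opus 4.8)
\medskip\noindent\emph{Proof sketch.} The plan is to view $(Z_n)_{n\ge0}$ as a time-homogeneous Markov chain on $\N$ — after averaging over the i.i.d.\ environment $(f_{\i})$ and the i.i.d.\ uniform choices $(a_i)$, the conditional law of $Z_{n+1}$ given $Z_n=x$ depends on $x$ alone — and to sandwich it between ordinary (state-independent) IBPRE's with reproduction law $f$, so that the asymptotics of Section~\ref{IBPRE} apply and transfer through the general Markov statements of Section~\ref{MARK}. Two facts are used throughout: state $0$ is aperiodic, since $\P_0(Z_1=0)=\P(Y_0=0)\in(0,1)$ by \eqref{cond}; and, because the reproduction law $f$ is fixed along the line, a monotone coupling (common environment sequence, coupled immigrations) lets one compare $(Z_n)$ pathwise with a standard IBPRE whose immigration stochastically dominates both $Y_0$ and $Y_1$.

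For (i), put $\bar Y:=Y_0'\vee Y_1'$ with $Y_0',Y_1'$ independent copies of $Y_0,Y_1$, so that $\E(\log^+\bar Y)<\infty$ and $\bar Y$ dominates $Y_0$ and $Y_1$. On a common probability space with a single environment sequence, build $(Z_n)$ together with the standard IBPRE $(\bar Z_n)$ of reproduction $f$ and immigration $\bar Y$ so that $Z_n\le\bar Z_n$ for all $n$. Since $\E(\log f'(1))<0$, Section~\ref{IBPRE} gives that $(\bar Z_n)$ returns to $0$ almost surely with a return time of finite mean and converges in distribution. By domination, $(Z_n)$ hits $0$ a.s.\ from any starting state, with return time no larger than that of $(\bar Z_n)$, so $(Z_n)$ started from $0$ is an aperiodic positive recurrent chain and therefore converges in distribution to its unique stationary law $\pi=:\mathcal L(Z_\infty)$. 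For an arbitrary starting state $k$ one conditions on the a.s.\ finite hitting time of $0$ and uses the strong Markov property together with dominated convergence to obtain $Z_n\Rightarrow Z_\infty$ as well.

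For (ii) the goal is that $(Z_n)$ leaves every finite set in probability. Here every visit to $0$ is followed, with probability $\P(Y_0\ge1)>0$, by a fresh non-empty batch of parasites, and the return time of $(Z_n)$ to $0$ is at least the extinction time of that batch (or of any later immigrated batch). Under the hypotheses of (ii) these extinction times have infinite mean: if $\E(\log f'(1))\ge0$ the family is critical or supercritical and survives for a time of infinite mean, or forever with positive probability; if $\E(\log f'(1))<0$ but $\max_i\E(\log^+Y_i)=\infty$, then some immigrated batch has size of infinite logarithmic mean, and a subcritical family of size $x$ dies only after a time of order $\log x$. Hence $(Z_n)$ started from $0$ is not positive recurrent, and since $0$ (hence every finite set) is reachable from every state, the Markov classification of Section~\ref{MARK} forces $(Z_n)$ to leave every finite set, i.e.\ $\P(Z_n\le M)\to0$ for every $M$ — except in the degenerate sub-case $f(0)=0$ a.s., where $(Z_n)$ is non-decreasing and, under the non-degeneracy of $\f$, tends to $\infty$ outright.

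The routine ingredients are the monotone coupling and the invocation of the standard IBPRE theory of Section~\ref{IBPRE}. The delicate points — precisely what Section~\ref{MARK} is for — are the Markov-chain classification that converts ``not positive recurrent'' into ``$Z_n\to\infty$ in probability'' (and the attendant need to know that $0$ is accessible from every state, where the standing non-degeneracy of $\f$ enters: without it, e.g.\ if $\f(s,t)\equiv st$ and $Y_1\equiv0$, the chain would be absorbed at a finite value and neither conclusion would hold), together with the extinction-time estimate of order $\log x$ for subcritical BPRE. I expect this interplay between the Markov structure and the degeneracy hypotheses, rather than any single computation, to be the main obstacle.
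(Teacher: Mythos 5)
Your overall strategy is the same as the paper's: average out the environment and the uniform choices to get a time-homogeneous chain, couple it monotonically with a state-independent IBPRE with dominating immigration, import the asymptotics of Section~\ref{IBPRE}, and transfer them through the Markov statements of Section~\ref{MARK}. For (i) your route is essentially the paper's (the paper's Lemma~\ref{lem} uses exactly the coupling with immigration $\max(Y_0,Y_1,\cdot)$), and your handling of an arbitrary initial state $k$ by conditioning on the a.s.\ finite hitting time of $0$ is a legitimate, slightly slicker alternative to the paper's second coupling that forces $\E_k(T_0)<\infty$. One imprecision: Proposition~\ref{ibpre}(i) does \emph{not} state that the dominating IBPRE has a return time to $0$ of finite mean under only $\E(\log^+ Y)<\infty$; it gives convergence in distribution with $\lim_n\P(Z_n=0)>0$, and finite mean of $T_0$ must then be extracted either from the renewal identity $u_\infty=1/\E_0(T_0)$ or, as the paper does in CASE~1, by contradiction from Lemma~\ref{mark}(ii) combined with tightness. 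This is fixable with the paper's own tools, but as written it is a claim your cited sources do not deliver.

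The genuine gap is in (ii), in the subcritical regime with $\max(\E(\log^+Y_i):i=0,1)=\infty$. Your whole argument there rests on the assertion that a subcritical BPRE started from $x$ individuals survives for an expected time of order $\log x$, and you explicitly defer it; but this is precisely the technical core of the paper's hardest case ($\E(\log^+Y_0)=\infty$), proved there via the conditional bound $\P_x(Z_n>0\mid f_0,\dots,f_{n-1})\ge 1-(1-e^{S_n})^x$ with $S_n=\sum_{i=0}^{n-1}\log(1-f_i(0))$, the law of large numbers for this random walk, and summation over $n$ to get $\E_0(T_0)\ge c\,\E(\log^+ Y_0)=\infty$. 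Without this estimate (or an equivalent), your proof of (ii) is incomplete. Two further points in the same case: when only $\E(\log^+Y_1)=\infty$, the batch arriving right after a visit to $0$ is $Y_0$-distributed, so you must wait for a later $Y_1$-batch arriving while the chain is positive (your parenthetical gestures at this, but it needs the event $\{Z_1\ge 1\}$, of probability $\P(Y_0\ge1)>0$, to be made explicit); the paper instead avoids the estimate altogether in that sub-case by coupling with the IBPRE with immigration $Y_1$ and invoking the divergence part of Proposition~\ref{ibpre}(i), which is a cheaper route you could also take. Finally, your appeal to ``survives for a time of infinite mean'' in the critical case tacitly uses the Kozlov bound (\ref{eqvltcrtq}), i.e.\ the standing integrability assumption of Section~\ref{BPRE}; this should be cited, since it is also what rules out the degenerate situation $f'(1)=1$ a.s.\ that your own counterexample-style remark about $\f(s,t)=st$, $Y_1\equiv0$ would otherwise expose.
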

 
With additional assumptions, we provide in Section \ref{randline} an estimate of the rate of  convergence of
$(Z_n)_{n\in\N}$ depending on the initial state. \\

Then, in  Section \ref{prop}, we  prove asymptotic results on the population of cells in generation $n$ as $n\rightarrow \infty$.

 First, we consider the case when  there is no contamination:  $Y_0=Y_1=0$ a.s.  We determine when the organism recovers, meaning that the number
of infected cells   becomes negligible compared to the total
number of cells. As stated in Proposition \ref{propsscont}, the recovery occurs a.s. iff
 $\E(\log(f'(1)))\leq 0$. Thus, we generalize results of Section 3 in \cite{vbk} to random environment. Again, for any reproduction rate of parasites, we can find a necessary and sufficient condition on sharing of their offspring so that the organism recovers a.s.    \\

As explained in introduction, a natural example is  the random binomial repartition of parasites. If the reproduction of parasites is given by the r.v. $Z$ 
and the random parameter   of the binomial repartition is given $P \in [0,1]$, the a.s. recovery criterion becomes
$$\log(\E(Z))\leq \E(\log(1/P)).$$
$\newline$

Second we take into account the contamination by parasites from outside the cell population with assumptions $(\ref{cond})$. We  focus on proportions of cells in generation $n$ with a given number of parasites:
$$F_k(n):= \frac{ \# \{ \mathbf{i}\in \GG_n :
Z_{\mathbf{i}}=k\}}{2^n} \qquad (k\in\N).$$
Using \cite{guyon} and the theorem above, we prove  the
 following  law of large numbers. 
\begin{Thm*}
If $\E(\log(f'(1)))<0$ and $\max(\E(\log^+(Y_i)): i=0,1)<\infty$,
then for every $k\in\N$, $F_k(n)$ converges in probability  to
a deterministic number   $f_k$ as $n\rightarrow
\infty$, such that $f_0>0$ and $\sum_{k=0}^{\infty} f_k=1$.  \\
Otherwise, for every $k\in\N$, $F_k(n)$ converges in probability  to
$0$ as $n\rightarrow\infty$.
\end{Thm*}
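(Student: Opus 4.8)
The plan is to treat the two regimes of the statement separately; both rest on identifying the \emph{mean} transition kernel of the tree-indexed chain $(Z_{\mathbf i})_{\mathbf i\in\TT}$ with the transition kernel of the random cell line $(Z_n)_{n\in\N}$. Denote by $Q^{(0)},Q^{(1)}$ the conditional laws of $Z_{\mathbf i0},Z_{\mathbf i1}$ given $Z_{\mathbf i}$; since the contamination is symmetric in the two daughters and the mechanisms $\f_{\mathbf i}$ are i.i.d., the averaged kernel $Q:=\tfrac12(Q^{(0)}+Q^{(1)})$ is exactly the transition kernel of the branching process in random environment with state-dependent immigration $(Z_n)_{n\in\N}$ built at the end of the introduction. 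A short induction on $n$ — a cell of $\GG_n$ is reached from the root by a path that applies $Q^{(0)}$ or $Q^{(1)}$ at each step, and these two kernels average to $Q$ — then yields, for every $k\in\N$,
$$\E\big(F_k(n)\big)=\frac1{2^n}\sum_{\mathbf i\in\GG_n}\P(Z_{\mathbf i}=k)=\P_0(Z_n=k).$$
If $\E(\log f'(1))\geq 0$ or $\max_{i}\E(\log^+Y_i)=\infty$, the convergence theorem for $(Z_n)_{n\in\N}$ stated above gives $Z_n\to\infty$ in probability under $\P_0$, hence $\E(F_k(n))\to 0$; as $F_k(n)\geq 0$ this is $L^1$-convergence, so $F_k(n)\to 0$ in probability. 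This settles the ``otherwise'' case.

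Assume now $\E(\log f'(1))<0$ and $\max_i\E(\log^+Y_i)<\infty$. The same theorem tells us that $Q^n(x,\cdot)$ converges in distribution, for \emph{every} starting state $x$, to a genuine probability measure $\nu:=\mathcal{L}(Z_\infty)$ on $\N$; on a countable state space this is convergence in total variation, so $Q$ is an ergodic Markov kernel with unique invariant probability $\nu$ (the total-variation limit of $Q^n(0,\cdot)$ is $Q$-invariant, and $Q^n(x,\cdot)\to\nu$ for all $x$ forces uniqueness). Since $(Z_{\mathbf i})_{\mathbf i\in\TT}$ is a Markov chain indexed by the binary tree whose mean kernel is precisely this ergodic $Q$, I would invoke Guyon's law of large numbers \cite{guyon} with the bounded test function $\ind_{\{k\}}$, obtaining $F_k(n)=2^{-n}\sum_{\mathbf i\in\GG_n}\ind_{\{Z_{\mathbf i}=k\}}\to\nu(\{k\})=:f_k$ in probability. (Equivalently one runs the second-moment computation directly: by the display above $\E(F_k(n))\to f_k$, and grouping ordered pairs of cells of $\GG_n$ by the generation $m$ of their most recent common ancestor, the level-$m$ term is bounded by $2^{-m-1}$ and, using the conditional independence of the two sub-lineages given the ancestor's state together with $Q^\ell\ind_{\{k\}}\to f_k$, converges to $2^{-m-1}f_k^2$; dominated summation over $m$ gives $\E(F_k(n)^2)\to f_k^2$, hence $\mathrm{Var}(F_k(n))\to0$.)

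There remain the two qualitative properties of the limit. First $\sum_k f_k=\nu(\N)=1$, because $Z_\infty$ is $\N$-valued. Second $f_0=\P(Z_\infty=0)>0$: by \eqref{cond} one has $\P(Y_0=0)\wedge\P(Y_1=0)>0$, so $0$ is accessible for $(Z_n)$, and in the subcritical regime it is recurrent — hence positively recurrent, $(Z_n)$ carrying the invariant probability $\nu$ — so that $\nu(\{0\})=1/\E_0(\textrm{return time to }0)>0$, the finiteness of that return time being the input from \cite{IBPRE} already used for $(Z_n)$.

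I expect the main obstacle to be the transition to Guyon's theorem: one must check that the plain convergence in distribution of $(Z_n)$ supplied by the previous theorem is the form of ergodicity his law of large numbers requires, and — $\N$ being non-compact — that no mass escapes to infinity in the second-moment estimate controlling $\mathrm{Var}(F_k(n))$; here both are harmless only because the test function $\ind_{\{k\}}$ is bounded and finitely supported. The other point needing the specific structure of the model is the strict positivity $f_0>0$, which genuinely uses \eqref{cond} together with the recurrence of $0$ for the subcritical process $(Z_n)$.
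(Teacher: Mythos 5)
Your proposal is correct and follows essentially the same route as the paper: the subcritical case is handled by feeding the ergodicity of the random cell line (Theorem \ref{cvrandline} (i)) into Guyon's law of large numbers (Theorem 8 of \cite{guyon}) with the test function $\ind_{\{k\}}$, and the other case by the Fubini identity $\E(F_k(n))=\P_0(Z_n=k)$ together with $Z_n\to\infty$ in probability and nonnegativity, exactly as in Sections \ref{cont} and \ref{supercont}; the values $f_k=\P(Z_\infty=k)$, $\sum_k f_k=1$ and $f_0=u_\infty=1/\E_0(T_0)>0$ are obtained as in the paper via Lemma \ref{mark}. Your parenthetical second-moment computation (grouping pairs by the most recent common ancestor, with weight $2^{-m-1}$ at level $m$) is a sound self-contained substitute for the citation of Guyon, provided you condition on the pair of daughter states of the common ancestor rather than on the ancestor's state alone, since the two daughters are not independent given the mother.
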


Finally, in Section \ref{nbparasit}, we give the asymptotic behavior of the total number of parasites in generation $n$ in the case when the growth of parasites follows a Galton Watson process and the contamination does not depend on the state of cell.

\section{Preliminaries}

We recall first some results about Branching Processes in Random Environment (BPRE) and then about Markov chains, which will be both useful to study BPRE with immigration $(Z_n)_{n\in\N}$. Recall that we denote by $k$ the initial number of parasites and by $\P_k$ the probability associated with. \\

\subsection{Branching Processes in Random Environment (BPRE)}
\label{BPRE}
 We consider here a BPRE  $(Z_n)_{n\in\N}$ specified by
a sequence of i.i.d. generating functions $(f_n)_{n\in\N}$ distributed
as $f$ \cite{at, atr, bpre}. More precisely, conditionally
on the environment
 $(f_n)_{n\in\N}$, particles at generation $n$ reproduce
independently of each other and their offspring has generating function $f_n$. Then $Z_n$
is the number of particles at generation $n$
and $Z_{n+1}$ is the sum of $Z_n$ independent random variables with generating function $f_n$.
That is, for every $n\in\N$,
$$\E\big(s^{Z_{n+1}}\vert Z_0,\dots,Z_n; \ f_0,\dots,f_n\big)=f_n(s)^{Z_n} \qquad (0\leq s\leq 1).$$
Thus, denoting by $F_n:=f_{0}\circ \cdots\circ f_{n-1}$, we have for every $k\in\N$,
$$\E_k(s^{Z_{n+1}}\ \vert \ \ f_0, ...,  f_n)=\E(s^{Z_{n+1}} \mid Z_0=k, \ f_0, ...,  f_n)=F_n(s)^k \qquad (0\leq s\leq 1).$$

When the environments are deterministic
(i.e. $f$ is a deterministic generating function), this process is the Galton Watson
process  with reproduction law $N$, where $f$ is the generating function of $N$ . \\

The process $(Z_n)_{n\in\N}$ is called subcritical, critical or supercritical if
$$\E\big(\log(f'(1))\big)$$
is negative, zero or positive  respectively. This process becomes extinct a.s.:
$$\P(\exists n \in \N: Z_n=0)=1$$
iff it is subcritical or critical \cite{at} (see \cite{bpree} for finer results).
$\newline$

In the critical case, we make the following integrability assumption:
  $$0<\E(\log(f_0'(1))^2)<\infty, \quad  \E\big( [1+ \log(f_0'(1))]f_0''(1)/2f_0'(1) \big)<\infty,$$
so that  there exist $0<c_1<c_2<\infty$ such that for every $n\in\N$ (see \cite{koz})
 \be
\label{eqvltcrtq}
c_1/\sqrt{n}\leq \P(Z_n>0)\leq c_2/\sqrt{n}.
\ee
See \cite{agkv} for more general result in the critical case.\\

\subsection{Markov chains}
\label{MARK}
We consider now a Markov chain $(Z_n)_{n\in\N}$ taking values in $\N$ and introduce   the first time  $T_0$ when $(Z_n)_{n\in\N}$
visits $0$ after time $0$:
$$T_0:=\inf\{i >0: Z_i=0\}.$$
Denote by
$$u_n:=\P_0( Z_n=0), \qquad  u_{\infty}:=1/\E_0(T_0) \qquad (1/\infty=0).$$
By now, we assume $0<\P_0(Z_1=0)<1$ and we give the asymptotic behavior of $(Z_n)_{n\in\N}$. The first part of (i) is the classical ergodic property  for  an aperiodic positive recurrent Markov chain and we provide an estimate of the  speed of convergence  depending on the initial state. Then (ii) gives the null recurrent   case, which is also a classical result. $\newline$
\begin{Lem} \label{mark}
 (i) If for every $k\in\N$,  $\E_k(T_0)<\infty$, then
$Z_n$ starting from $k$  converges in distribution to a finite random variable $Z_{\infty}$, which does not depend
on $k$ and verifies
$$\P(Z_{\infty}=0)>0.$$
Moreover  there exists $A>0$ such that for all $n,k \in\N$,
\bea
&&\sum_{l\in\N} \vert \P_k(Z_n=l)- \P(Z_{\infty}=l) \vert \nonumber \\
\label{ctrl}
&&\leq   A\big[\sup_{n/2\leq l\leq n}\{\vert u_{l}- u_{\infty}\vert \} +
\E_0(T_0\ind_{T_0>n/4})+\E_k(T_0\ind_{T_0>n/4})\big].
\eea

(ii) If $\E_0(T_0)=\infty$ and for every $l\in\N$, $\P_l(T_0<\infty)>0$, then for every $k\in\N$, $Z_n\rightarrow \infty$ in $\P_k$-probability   as $n\rightarrow \infty$.
\end{Lem}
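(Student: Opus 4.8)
The plan is to build everything on the regeneration of $(Z_n)$ at the state $0$, so that no irreducibility of the whole chain is needed. I would write $g_k(m):=\P_k(T_0=m)$ for the first-passage law and, for $j\geq 0$, set $\gamma_j(l):=\P_0(T_0>j,\,Z_j=l)$; thus $\gamma_0(l)=\ind_{l=0}$, $\sum_l\gamma_j(l)=\P_0(T_0>j)$, and $\sum_{j\geq 0}\gamma_j(l)$ is the expected number of instants $j<T_0$ with $Z_j=l$. The strong Markov property at the successive returns to $0$ yields the last-exit decomposition $\P_0(Z_n=l)=\sum_{r=0}^n u_r\,\gamma_{n-r}(l)$, the first-passage decomposition $\P_k(Z_n=l)=\P_k(Z_n=l,T_0>n)+\sum_{s=1}^n g_k(s)\,\P_0(Z_{n-s}=l)$, and, combining the two, $\P_k(Z_n=l)=\P_k(Z_n=l,T_0>n)+\sum_{s=1}^{n} g_k(s)\sum_{i=0}^{n-s}u_{n-s-i}\,\gamma_i(l)$. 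Since $\P_0(T_0=1)=\P_0(Z_1=0)>0$, the inter-return distribution is aperiodic, so the renewal theorem gives $u_n\to u_\infty=1/\E_0(T_0)$ (with $1/\infty=0$); this is the only input beyond elementary manipulations.

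For (i) one has $\E_0(T_0)<\infty$, hence $u_\infty>0$ and $\P_k(T_0<\infty)=1$ for all $k$, and the candidate limit is $\pi(l):=u_\infty\sum_{j\geq 0}\gamma_j(l)$. It satisfies $\sum_l\pi(l)=u_\infty\sum_{j\geq0}\P_0(T_0>j)=u_\infty\E_0(T_0)=1$ and $\pi(0)=u_\infty\gamma_0(0)=u_\infty>0$, so $\pi$ is the law of a finite $Z_\infty$ with $\P(Z_\infty=0)>0$. To obtain (\ref{ctrl}) I would write $\pi(l)=\big(\sum_{s\geq1}g_k(s)\big)u_\infty\sum_{i\geq0}\gamma_i(l)$ and subtract it from the combined decomposition, so that $\P_k(Z_n=l)-\pi(l)$ splits into four pieces: the boundary term $\P_k(Z_n=l,T_0>n)$; the principal term $\sum_{s=1}^n g_k(s)\sum_{i=0}^{n-s}(u_{n-s-i}-u_\infty)\gamma_i(l)$; a ``deep excursion'' term $-u_\infty\sum_{s=1}^n g_k(s)\sum_{i>n-s}\gamma_i(l)$; and an ``overshoot'' term $-\pi(l)\sum_{s>n}g_k(s)$. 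Taking $\sum_l|\cdot|$ and using $\sum_l\gamma_i(l)=\P_0(T_0>i)$ and $\sum_l\pi(l)=1$, the estimate reduces to $2\,\P_k(T_0>n)$, the deep-excursion term, and $\sum_{s=1}^n g_k(s)\sum_{i=0}^{n-s}|u_{n-s-i}-u_\infty|\,\P_0(T_0>i)$. In this last sum I would cut the first-hitting time $s$ and the excursion age $i$ each at $n/4$: on $\{s\leq n/4,\,i\leq n/4\}$ the index $n-s-i$ lies in $[n/2,n]$, so that part is at most $\E_0(T_0)\,\sup_{n/2\leq l\leq n}|u_l-u_\infty|$, whereas the three complementary regions, the deep-excursion term, and $\P_k(T_0>n)$ are all small, being bounded via $\P_k(T_0>n/4)\leq\frac{4}{n}\E_k(T_0\ind_{T_0>n/4})$ and $\sum_{i>n/4}\P_0(T_0>i)\leq\E_0(T_0\ind_{T_0>n/4})$. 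Collecting the pieces and absorbing the finite constant $\E_0(T_0)$ and the numeric factors into a single $A$ then gives (\ref{ctrl}) for $n\geq 4$; for $n\leq 3$ it is trivial, since then $\E_0(T_0\ind_{T_0>n/4})=\E_0(T_0)\geq 1$ while the left-hand side is $\leq 2$. Convergence in distribution of $Z_n$ to $Z_\infty$, from any initial state, then follows from (\ref{ctrl}): $\sup_{n/2\leq l\leq n}|u_l-u_\infty|\to 0$ by the renewal theorem, and $\E_0(T_0\ind_{T_0>n/4}),\,\E_k(T_0\ind_{T_0>n/4})\to 0$ by dominated convergence since $T_0$ is integrable.

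For (ii), $\E_0(T_0)=\infty$ forces $u_\infty=0$ and $u_n\to 0$ (renewal theorem in the null-recurrent case, trivially if $0$ is transient). The fact I would isolate is that $\sum_{j\geq0}\gamma_j(l)<\infty$ for every $l$; equivalently, from any starting state the expected number of instants $j<T_0$ with $Z_j=l$ is finite. This holds because $\P_l(T_0<\infty)>0$ forces $q_l:=\P_l(\text{the chain hits }0\text{ before returning to }l)>0$: if $q_l=0$, then, decomposing on the successive visits of $(Z_n)$ to $l$, one would reach $0$ from $l$ with probability $0$, contradicting $\P_l(T_0<\infty)>0$; hence the number of such instants is, whatever the starting state, dominated by a geometric variable of parameter $q_l$. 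Granted this, the last-exit decomposition and dominated convergence give $\P_0(Z_n=l)=\sum_{r=0}^n u_r\gamma_{n-r}(l)\to 0$ for every $l$, and then the first-passage decomposition, together with $\sum_n\P_k(Z_n=l,T_0>n)<\infty$, gives $\P_k(Z_n=l)\to 0$ for every $k$ and $l$; summing over $l\leq M$ shows $\P_k(Z_n\leq M)\to 0$ for each $M$, that is, $Z_n\to\infty$ in $\P_k$-probability.

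The step I expect to be the real obstacle is the bound (\ref{ctrl}): organizing the double convolution so that precisely the window $[n/2,n]$ for the $u_l$'s and the truncation $\{T_0>n/4\}$ for the hitting and excursion times come out, while keeping the constant $A$ uniform in the starting state $k$. Identifying $\pi$ and the whole of part (ii) are then routine once the finiteness of the within-excursion visit numbers has been noted.
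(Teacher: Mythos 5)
Your proof is correct. For part (i) you follow essentially the same route as the paper: regeneration at $0$, the last-exit/first-passage decompositions, the same candidate limit $\pi(l)=u_\infty\sum_{i\ge 0}\P_0(Z_i=l,\,T_0>i)$ (the paper's $\alpha_l$), and the same mechanism for the error bound — a double convolution in which the renewal error $|u_m-u_\infty|$ is read off on the window $[n/2,n]$ while the complementary regions are absorbed into the truncated expectations $\E_0(T_0\ind_{T_0>n/4})$ and $\E_k(T_0\ind_{T_0>n/4})$; the paper organizes this by first bounding $|\P_k(Z_{n-i}=0)-u_\infty|$ and substituting, and cuts at $n_0=n/2$ so that $(n-n_0)/2=n/4$, whereas you substitute the full last-exit formula and cut $s$ and $i$ each at $n/4$, which is only a bookkeeping difference (your extra factor $4/n$ in $\P_k(T_0>n/4)\le \tfrac4n\E_k(T_0\ind_{T_0>n/4})$ is not even needed, since $T_0\ge 1$ gives the bound without it). For part (ii) your argument genuinely differs from the paper's: the paper deduces $u_n\to0$ from the renewal theorem, shows the forward waiting time $D_n$ to the next visit of $0$ tends to infinity in probability, and derives a contradiction from any subsequential mass at a state $l$ using $\P_l(T_0<\infty)>0$; you instead prove the quantitative fact that $q_l=\P_l(\hbox{hit }0\hbox{ before returning to }l)>0$, so the number of visits to $l$ before $T_0$ is geometrically dominated uniformly in the starting state, and then $\P_k(Z_n=l)\to0$ falls out of the two decompositions by dominated convergence. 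Your version is slightly longer but constructive (it gives summability of $\P_k(Z_n=l,\,T_0>n)$ in $n$), while the paper's is a softer and shorter contradiction argument; both rest on the same two inputs, the renewal theorem and the hypothesis $\P_l(T_0<\infty)>0$.
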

$\newline$
\begin{proof}[Proof of (i)]
 First,  note that by the Markov property, for every $n\in\N$,
\bea
&&\vert \P_k(Z_n=0)-u_{\infty} \vert \nonumber \\ 
&= & \vert \sum_{j=1}^n \P_k(T_0=j)\P_0(Z_{n-j}=0) - u_{\infty}\vert \nonumber \\
\label{majtmps}
&\leq & \sum_{j=1}^n \P_k(T_0=j) \vert u_{n-j} - u_{\infty}\vert +u_{\infty}\P_k(T_0>n).
\eea
On the event $\{T_0\leq n\}$, define  $R_n$  as the last passage time of $(Z_n)_{n \in \N}$ by $0$ before time $n$:
$$R_n:=\sup\{ i\leq n: Z_i=0\}.$$
For all $0\leq i\leq n$ and $l\in\N$, by the  Markov property,
\bea
\P_k(Z_n=l) 
&=&\P_k(T_0>n, \ Z_n=l) +\sum_{i=0}^n  \P_k(T_0\leq n, \ R_n=n-i,  \ Z_n=l)  \nonumber \\
&=& \P_k(T_0>n, \ Z_n=l) +\sum_{i=0}^n \P_k(Z_{n-i}=0) \P_0(Z_i=l, \ T_0>i). \nonumber
\eea
Define now
$$\alpha _ l:= u_{\infty}
\sum_{i=0}^{\infty}   \P_0(Z_i=l, \ T_0>i).$$
We then have 
\Bea
\vert \P_k(Z_n=l)- \alpha_l \vert &\leq & \P_k(T_0>n, \ Z_n=l)+u_{\infty}
\sum_{i=n+1}^{\infty}  \P(Z_i=l, \ T_0>i) \\
&& + \sum_{i=0}^n \P(Z_i=l, \ T_0>i)\big\vert u_{\infty}- \P_k(Z_{n-i}=0)\big\vert.
\Eea
Summing over $l$  leads to
\bea
\sum_{l\in\N} \vert \P_k(Z_n=l)- \alpha_l \vert
& \leq & \P_k(T_0>n)+u_{\infty} \E_0(T_0\ind_{T_0>n+1}) \nonumber\\
&& \quad +  \sum_{i=0}^{n}    \P(T_0>i)
\big\vert u_{\infty}- \P_k(Z_{n-i}=0)\big\vert. \label{one}
\eea
Moreover  using  (\ref{majtmps}), we have for all $0\leq n_0\leq n$,
\bea
&&\sum_{i=0}^{n}  \P(T_0>i)
\big\vert u_{\infty}- \P_k(Z_{n-i}=0) \big\vert \nonumber \\
&\leq & \sum_{i=0}^{n}  \P_0( T_0>i)\left[\sum_{j=1}^{n-i} \P_k(T_0=j) \vert u_{n-i-j} - u_{\infty}\vert +u_{\infty}\P_k(T_0>n-i)\right]\nonumber \\
&\leq &  \sum_{i=0}^{n}\P_0( T_0>i)  \sum_{j=1}^{n-i} \P_k(T_0=j) \vert u_{n-i-j} - 
u_{\infty}\vert+ u_{\infty} \sum_{i=0}^{n}  \P_0( T_0>i)\P_k(T_0>n-i). \label{two}
\eea
Finally, denoting by $M:=\sup_{n\in\N} \{\vert u_n - u_{\infty}\vert\}$,
\bea
&& \sum_{i=0}^{n}\P_0( T_0>i)  \sum_{j=1}^{n-i} \P_k(T_0=j) \vert u_{n-i-j} - u_{\infty}\vert \nonumber \\
& & \qquad \leq \sup_{n_0\leq l\leq n}\{\vert u_{l}- u_{\infty}\vert \}\sum_{i=0}^{n}   \P_0(T_0>i) \sum_{j=1}^{n-i} \P_k(T_0=j) 1_{n-i-j\geq n_0} \nonumber\\
&& \qquad  \quad +M \sum_{i=0}^{n}   \P_0(T_0>i) \sum_{j=1}^{n-i} \P_k(T_0=j) 1_{n-i-j< n_0} \nonumber\\
& &  \qquad \leq \sup_{n_0\leq l\leq n}\{\vert u_{l}- u_{\infty}\vert \}\sum_{i=0}^{n}   \P_0(T_0>i) \sum_{j=1}^{n-i} \P_k(T_0=j) \qquad  \quad \qquad  \quad \nonumber\\
&& \qquad  \quad + M\sum_{i=0}^{n-n_0}   \P_0(T_0>i)\P_k(T_0> n-n_0-i). \label{three}
\eea
Combining (\ref{one}), (\ref{two}) and (\ref{three})  and
using  that
\Bea
&& \sum_{i=0}^{n}   \P_0(T_0>i)  \P_k(T_0>n-i)\leq \E_0(T_0 \ind_{T_0> n/2})+\E_k(T_0 \ind_{T_0> n/2}), \\
&& \sum_{i=0}^{n-n_0}   \P_0(T_0>i)\P_k(T_0\geq n-n_0-i) \leq \E_0(T_0 \ind_{T_0> (n-n_0)/2})+\E_k(T_0 \ind_{T_0>n-n_0)/2}),
\Eea
we get,  for all $0\leq n_0\leq n$,
\bea \sum_{l\in\N} \vert \P_k(Z_n=l)- \alpha_l \vert
&\leq  & \P_k(T_0>n)+u_{\infty} \E_0(T_0\ind_{T_0>n+1})+\sup_{n_0\leq l\leq n}\{\vert u_{l}- u_{\infty}\vert \} \E_0(T_0) \nonumber \\
&& \label{nine} \quad
 +[u_{\infty}+M][\E_0(T_0 \ind_{T_0> (n-n_0)/2})+\E_k(T_0 \ind_{T_0> (n-n_0)/2})].
\eea
As $\P_0(Z_1=0)>0$,  by the renewal theorem \cite{Feller}, 
$u_n\stackrel{n\rightarrow \infty}{\longrightarrow } u_{\infty}.$ 
Adding that
$\E_k(T_0)<\infty$ and $\E_0(T_0)<\infty$ ensures that
$$\sum_{l\in\N} \vert \P_k(Z_n=l)- \alpha_l \vert  \stackrel{n\rightarrow \infty}{\longrightarrow }0,$$
which proves that $Z_n$ starting from $k$ converges in distribution  to a r.v. $Z_{\infty}$ which does not depend on $k$.

The inequality of $(i)$ is obtained by letting $n_0=n/2$ in (\ref{nine}).
\end{proof}
$\newline$
\begin{proof}[Proof of (ii)]
If $\E_0(T_0)=\infty$, then by the renewal theorem again \cite{Feller},
$$u_n\stackrel{n\rightarrow \infty}{\longrightarrow} 0.$$
So
$$D_n=\inf\{k-n: k \geq n, Z_k=0\} \stackrel{n\rightarrow \infty}{\longrightarrow} \infty, \qquad  \t{in probability}.$$
 Assume that there exist $l\in\N$,  $\epsilon>0$ and an increasing sequence of integers $(u_n)_{n\in \N}$
 such that
 $$\P_k(Z_{u_n}=l)\geq \epsilon.$$
 As $\P_l(T_0<\infty)>0$ by hypothesis, there exists $N>0$ such that
 $$\P_l(T_0=N)>0.$$
 Thus, by the Markov property,
 $$\P_k(Z_{u_n+K}=0)\geq \P_k(Z_{u_n}=l)\P_l(T_0=N)\geq \epsilon \P_l(T_0=N).$$
Then, for all $n \in \N$,
$$\P_k(D_{u_n}\leq N)\geq   \epsilon \P_l(T_0=N)>0,$$
which is in contradiction with the fact that $D_n\rightarrow \infty$ in $\P_k$ as $n\rightarrow \infty$. Then,
$\P_k(Z_n=l)\rightarrow 0$ as $n\rightarrow \infty$.
\end{proof}
$\newline$

\section{Branching processes in random environment with immigration
(IBPRE)} \label{IBPRE}
 We consider here a BPRE $(Z_n)_{n\in\N}$ whose
reproduction law is given  by the random p.g.f $f$ and  we add at each
generation $n+1$ a random number of immigrants $Y_n$ independent and
identically distributed as a r.v $Y$ such that
$$\P(Y=0)>0.$$
More precisely, for every $n\in \N$,
\be
\label{eqimmigrat}
Z_{n+1}=Y_{n}+ \sum_{i=1}^{Z_n} X_i,
\ee
where $(X_i)_{i\in\N}$, $Y_n$ and $Z_n$ are independent and
conditionally on $f_n=g$, the $(X_i)_{i\in\N}$ are i.i.d. with common probability
generating function $g$. \\

Note that if the contamination  does not dependent on the
fact that this cell is already infected or not (i.e.
$Y_0$ and $Y_1$ are identically distributed), then the number of parasites in a random
cell line defined in Introduction is a IBPRE whose reproduction law given
by $f$ and immigration by $Y\stackrel{d}{=}Y_0\stackrel{d}{=}Y_1$. \\

We give now the asymptotic behavior of this process. These results are classical for the Galton Watson process with immigration \cite{asmu, RuL}. We follow the same method in the case of random environment for the subcritical and supercritical cases. We   give in (ii)
the tail of the time 
$$T_0=\inf\{n>0: Z_n=0\}$$ when the process returns to $0$ in the subcritical case, which is proved in  \cite{IBPRE} and we use Section \ref{MARK} for the critical case. 
$\newline$

\begin{Pte} \label{ibpre} (i) If $\E\big(\log(f'(1))\big)<0$ and  $\E(\log^+(Y))<\infty$, then $Z_n$ converges in distribution to a finite random variable as $n\rightarrow \infty$ and $\lim_{n\rightarrow \infty} \P(Z_n=0)>0$. \\
Otherwise
$Z_n\rightarrow \infty$ as  $n\rightarrow \infty$.\\

(ii)  If
$\E\big(\log(f'(1))\big)<0$ and there exists $q>0$ such that
$\E(Y^q)<\infty$, then there exist $c,d>0$ such that for every $n\in\N$,
$$\P(T_0>n)\leq ce^{-dn}.$$

(iii) Assume  $\E(f'(1)^{-1})<1$ and  $\E(\log^+(Y))<\infty$, then there exists a  finite
r.v. $W$ such that
$$[\Pi_{i=0}^{n-1} f_i'(1)]^{-1}Z_n
\stackrel{n\rightarrow \infty}{\longrightarrow } W, \quad \t{in} \ \P.$$
\end{Pte}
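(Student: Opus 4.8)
The plan is to treat the three parts separately, using different tools. For part (i), I would split into the two regimes according to whether $\E(\log(f'(1)))<0$ with $\E(\log^+(Y))<\infty$, or not. In the subcritical–integrable case, the standard approach (as in \cite{RuL}, \cite{asmu}) is to exhibit an explicit candidate limit: conditionally on the environment $(f_n)_{n\in\N}$ and the immigration $(Y_n)_{n\in\N}$, the process $Z_n$ has the same law as $\sum_{j=0}^{n-1} (\text{offspring at time } n \text{ of the } Y_j \text{ immigrants entering at time } j+1)$, so by reversing the environment and using branching-process estimates one checks that $\sum_{j\geq 0} (\text{contribution of immigrants } j \text{ steps in the past})$ converges a.s.\ under $\E(\log(f'(1)))<0$ and $\E(\log^+(Y))<\infty$; then $Z_n$ converges in distribution to this sum. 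The positivity of $\lim \P(Z_n=0)$ follows because $\P(Y=0)>0$ and, in the reversed sum, there is positive probability that every immigrant term is $0$. In the complementary case I would argue $Z_n\to\infty$ in probability: if $\E(\log(f'(1)))\geq 0$ the embedded BPRE part already does not die out fast enough, and even a single immigrant generation feeds into a critical/supercritical BPRE; if instead $\E(\log^+(Y))=\infty$, a Borel–Cantelli argument on the immigration alone (infinitely many very large immigration inputs) forces $Z_n\to\infty$. Alternatively, both sub-cases can be folded into Lemma \ref{mark}(ii) by checking $\E_0(T_0)=\infty$.

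For part (ii), the statement is quoted from \cite{IBPRE}: under $\E(\log(f'(1)))<0$ and $\E(Y^q)<\infty$ for some $q>0$, the return time $T_0$ to $0$ has an exponential tail. So here I would simply invoke that reference, perhaps after remarking that this moment condition is stronger than $\E(\log^+(Y))<\infty$, so we are in the setting of part (i); the exponential tail then refines the mere finiteness of $\E_0(T_0)$ that (i) gives.

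For part (iii), under the stronger subcriticality $\E(f'(1)^{-1})<1$ (which implies $\E(\log(f'(1)))<0$ by Jensen) together with $\E(\log^+(Y))<\infty$, I would look at the normalized process $W_n := \big(\prod_{i=0}^{n-1} f_i'(1)\big)^{-1} Z_n$. From the recursion (\ref{eqimmigrat}) and conditioning on the environment, $\E(W_{n+1}\mid \mathcal{F}_n, \text{env}) = W_n + \big(\prod_{i=0}^{n} f_i'(1)\big)^{-1} \E(Y)$; more usefully, decomposing $Z_n$ into the independent contributions of the immigrants $Y_0,\dots,Y_{n-1}$ and using that each contribution, renormalized, is a nonnegative martingale in its own clock, one writes $W_n = \sum_{j=0}^{n-1} M^{(j)}_{n}$ where $M^{(j)}$ is the Kesten–Stigum-type martingale for the BPRE started by the $j$-th immigration batch. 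The condition $\E(f'(1)^{-1})<1$ makes the prefactors $\big(\prod_{i=0}^{j-1} f_i'(1)\big)^{-1}$ summable in $L^1$ (geometrically, in expectation over the environment), and $\E(\log^+(Y))<\infty$ controls the batch sizes; one then shows $W_n$ is Cauchy in probability and converges to a finite limit $W$. The main obstacle is exactly this last step: establishing $L^1$ (or in-probability) convergence of $W_n$ requires a uniform-integrability-type control on the sum of the individual-batch martingales despite the random environment, which is where the hypothesis $\E(f'(1)^{-1})<1$ is used in an essential (not merely Jensen-derived) way; the delicate point is interchanging the limit with the infinite sum over immigration epochs, handled by dominating the tail $\sum_{j\geq N}$ using the geometric decay of $\E\big[(\prod_{i<j} f_i'(1))^{-1}\big]$ together with $\E(Y)<\infty$ (which follows from $\E(Y^q)<\infty$ is not assumed here, so one must instead truncate $Y$ at level $e^{\epsilon j}$ and use $\E(\log^+ Y)<\infty$ to discard the truncation error via Borel–Cantelli).
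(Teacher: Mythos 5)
Your outline for part (ii) (quote Key \cite{IBPRE}) and for the subcritical--integrable half of part (i) is essentially the paper's route, but the complementary case of (i) has a real gap. When $\E(\log^+(Y))=\infty$, a Borel--Cantelli argument ``on the immigration alone'' only yields $\limsup_{n}c^{-n}Z_n=\infty$ a.s.\ along a sparse random subsequence (the paper states exactly this as a side remark); it does \emph{not} give $Z_n\to\infty$ in probability, because between the rare huge immigration inputs a subcritical reproduction law drives the population back down and $\P(Y=0)>0$, so $\P(Z_n\leq M)$ need not vanish if you only look at the last immigration term. The paper's actual argument uses the reversed, stochastically increasing representation $Z_\infty=\sum_{k\geq0}\sum_{i\leq Y_k}X_i(k)$: assuming $\P(Z_\infty<\infty)>0$, conditional Borel--Cantelli gives $\sum_k Y_k\,\P(X_1(k)>0\mid \t{env})<\infty$, the convexity bound $1-g(s)\geq(1-g(0))(1-s)$ gives $\P(X_1(k)>0\mid\t{env})\geq\exp(S_k)$ with $S_k=\sum_{i<k}\log(1-f_i(0))$, and large deviations for this negative-drift walk force $\sum_k e^{\alpha k}Y_k<\infty$ with positive probability, contradicting $\limsup_k c^kY_k=\infty$. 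Likewise, in the critical case your phrase ``does not die out fast enough'' must be made quantitative: the paper invokes the Kozlov estimate (\ref{eqvltcrtq}), $\P_1(\bar Z_n>0)\geq c_1/\sqrt n$, to get $\E_1(T_0)=\infty$, hence $\E_0(T_0)=\infty$ by stochastic comparison, and concludes with Lemma \ref{mark} (ii); your fallback sentence mentions this lemma but you never verify $\E_0(T_0)=\infty$ in either sub-case, which is where the work lies.

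In part (iii) you misidentify the regime: $\E(f'(1)^{-1})<1$ does not strengthen subcriticality. By Jensen, $-\E(\log f'(1))=\E\big(\log f'(1)^{-1}\big)\leq\log\E(f'(1)^{-1})<0$, so $\E(\log f'(1))>0$; this is a supercritical, Kesten--Stigum-type setting, which is precisely why normalizing by $\Pi_{i<n}f_i'(1)$ produces a nondegenerate limit. Your ingredients (geometric decay of $\E\big[(\Pi_{i<j}f_i'(1))^{-1}\big]=\E(f'(1)^{-1})^{j}$, truncation $Y_j\leq e^{\epsilon j}$ eventually via Borel--Cantelli) are the right ones, but you explicitly leave open the interchange of the limit with the infinite sum over immigration epochs, i.e.\ the only nontrivial step. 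The paper sidesteps this entirely: it puts the whole sequence $(Y_k)_{k\in\N}$ into the filtration from time $0$, so that $P_nZ_n$ with $P_n=[\Pi_{i<n}f_i'(1)]^{-1}$ is a nonnegative submartingale satisfying $\E(P_nZ_n\mid\mathcal{F}_0)=Z_0+\sum_{i=0}^{n-1}\E(f'(1)^{-1})^{i+1}Y_i$, which is a.s.\ bounded in $n$ because $\sum_k c^{-k}Y_k<\infty$ a.s.\ under $\E(\log^+Y)<\infty$; submartingale convergence then gives the a.s.\ (hence in probability) limit $W$ in one stroke. I recommend replacing your batch-by-batch Cauchy argument with this conditioning trick, or else actually carrying out the domination of $\sum_j\sup_n M^{(j)}_n$ that your sketch presupposes.
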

$\newline$

Note also that by the Borel-Cantelli lemma,  if $\E(\log^+(Y_1))=\infty$, then for every $c>1$,
$$\limsup_{n\rightarrow \infty} c^{-n}Z_n = \infty \quad \t{a.s.}$$
since $Z_n\geq Y_n$ a.s. 
Moreover the proof of Section \ref{randline} provides an other approach to prove that $(Z_n)_{n\in\N}$
tends to  $\infty$ if $\E(\log^+(Y))=\infty$. \\

\begin{proof}[Proof of (i) and (ii) in the subcritical case: $\E\big(\emph{log}(f'(1))\big)<0$]
The subcritical case  with
assumption  $\E(\log^+(Y))<\infty$ is handled in \cite{IBPRE}: First
part of (i) is Theorem 3.3 and  (ii) is a consequence
of Theorem 4.2 of \cite{IBPRE}. \\

We focus now on the case  $\E(\log^+(Y))=\infty$ and prove that $Z_n$ converges in probability to $\infty$.  The proof
is close to the Galton Watson case (see \cite{asmu} or \cite{RuL}). First, by Borel-Cantelli lemma,
$$ \limsup_{k\rightarrow \infty} \log^+(Y_k)/k =\infty \qquad a.s.$$
Then, for every $c\in (0,1)$,
\be
\label{bc}
\limsup_{k\rightarrow \infty} c^kY_k=\infty \qquad a.s.
\ee
$\t{} \qquad$  Note that
$$Z_n=\sum_{k=0}^{n-1} Z_{k,n},$$
where $Z_{k,n}$ is the number of descendants in generation $n$ of immigrants in generation $n-k$. Thus, denoting by
$Y_{k,n}$ the number of immigrants in generation $n-k$ and $X_i(k,n)$ the number of descendants in generation $n$ of immigrant $i$ in generation $n-k$, we have
$$Z_n=\sum_{k=0}^{n-1} \sum_{i=1}^{Y_{k,n}} X_i(k,n).$$
This sum increases stochastically  as $n$ tends to infinity and converges in distribution to
$$Z_{\infty}=\sum_{k=0}^{\infty} \sum_{i=1}^{Y_{k}} X_i(k),$$
where conditionally on $(f_i: i \in \N)$, $(X_i(k): \ i \in \N, \ k \in \N)$ are independent  and  the probability generating function of $X_i(k)$ is equal to $f_{k-1}\circ ...\circ f_0$. Roughly speaking, $X_i(k)$ is the contribution of immigrant $i$ which arrives $k$ generations before 'final time' $\infty$. The integer $X_i(k)$ is   the population in generation $k$ of a BPRE  without immigration starting from $1$. \\

Assume now that $Z_{\infty}<\infty$ with a positive probability. As $(X_i(k): \ k \in \N, \ 1\leq i\leq Y_k)$ are integers, then conditionally on $Z_{\infty}<\infty$, only a finite number of them are positive. Thus, by Borel-Cantelli lemma,  conditionally
 on
$(Z_{\infty}<\infty, \ Y_k:  k \in \N, \  f_i: i \in \N)$,
$$\sum_{k=0}^{\infty} Y_k\P(X_1(k) > 0) < \infty \qquad \t{a.s.}$$
Moreover, by convexity, for all $g$ p.g.f and  $s\in[0,1]$,
$$\frac{1-g(s)}{1-s}=\frac{g(1)-g(s)}{1-s}\geq \frac{g(1)-g(0)}{1-0}=1-g(0),  \qquad (0\leq s\leq 1).$$
Then $1-g(s)\geq (1-g(0))(1-s)$  and  by induction, we have for every $k\in\N$,
\Bea \P(X_1(k)> 0 \ \vert \ f_i: i \in \N)&=&1-f_{k-1}\circ ...\circ f_0(0) \\
& \geq & \Pi_{i=0}^{k-1} (1-f_i(0)) \\
& = & \exp(S_k),
\Eea
where $S_k:=\sum_{i=0}^{k-1} \log(1-f_i(0))$. Thus, conditionally
 on
$(Z_{\infty}<\infty, \ Y_k:  k \in \N, \  f_i: i \in \N)$,
$$\sum_{k=0}^{\infty} Y_k \exp(S_k)  < \infty \qquad \t{a.s.}$$
Thus, on the event $\{Z_{\infty}<\infty\}$ which has a positive probability, we get
$$\sum_{k=0}^{\infty} Y_k \exp(S_k)  < \infty \ \t{a.s.}$$
Moreover $S_n$ is a random walk with negative drift $\E(\log(1-f_0(0)))$. So
 letting $\alpha<\E(\log(1-f_0(1)))$, $\P(S_n<\alpha n)$ decreases exponentially by classical large deviation results. Then by Borel-Cantelli lemma, $S_n$ is less than $\alpha n$ for a finite number of $n$, and
$$L:=\inf_{n \in \N} \{S_n-\alpha n\}>-\infty \qquad \t{a.s.}$$
Using that for every $k\in\N$, $S_k\geq \alpha k+L$ a.s., we get
$$\sum_{k=0}^{\infty} \exp(\alpha k) Y_k   < \infty,$$
with positive probability. This is in contradiction with  $(\ref{bc})$. Then $Z_{\infty}=\infty$ a.s. and $Z_n$ converges in probability to $\infty$ as $n\rightarrow \infty$.
\end{proof}
$\newline$
\begin{proof}[Proof of (i) in the critical and supercritical case: $\E\big(\emph{log}(f'(1))\big)\geq  0$]
First, we focus on the critical case. Recall that $ T_0=\inf\{i>0: Z_i=0\}$ and consider
$(\bar{Z}_n)_{n\in\N}$ the BPRE associated with
$(Z_n)_{n\in\N}$, that is the critical BPRE with reproduction
law $f$ and no immigration. Thanks to (\ref{eqvltcrtq}), there exists $c_1>0$ such that for ever $n\in\N$,
$$\P_1(\bar{Z}_n > 0)\geq c_1/\sqrt{n}.$$
Adding that
$$\P_1(T_0>n)=\P_1(Z_n>0)\geq \P_1(\bar{Z}_n>0),$$
ensures that
$$\E_1(T_0)=\infty.$$
Then $\E_0(T_0)=\infty$ since IBPRE $(Z_n)_{n\in\N}$ starting from $1$ is stochastically larger than $(Z_n)_{n\in\N}$ starting from $0$. Moreover $\forall k \in \N,  \ \P_k(T_0<\infty)>0$, since $\P_k(\bar{T}_0<\infty)=1$ and
$ \P(Y=0)>0$. Then
Lemma \ref{mark} (ii) ensures that $Z_n\rightarrow \infty$ in $\P$ as $n\rightarrow \infty$. \\

For the supercritical case, follow the proof in the critical case (or use the result  with a coupling argument) to get 
 that $Z_n\rightarrow \infty$ in probability  as $n\rightarrow \infty$
\end{proof}
$\newline$
\begin{proof}[Proof of  (iii)]
We   follow again \cite{RuL}.  If $\E(\log^+(Y))<\infty$,  by Borel-Cantelli Lemma
$$\limsup_{k\rightarrow \infty} \log^+(Y_k)/k=0.$$
Then for every $c>1$,
\be
\label{bcd}
\sum_{k=0}^{\infty} c^{-k}Y_k<\infty \qquad \t{a.s.}
\ee
Define 
$$P_n:=[\Pi_{i=0}^{n-1}f_i'(1)]^{-1},$$
and denote by $\mathcal{F}_n$ the $\sigma$-field generated by $(Z_i: 0\leq i \leq n)$, $(P_i: 0\leq i\leq n)$
and $(Y_k: k\in \N)$. 
 Then using $(\ref{eqimmigrat})$, we have
\Bea \E(P_{n+1}Z_{n+1} \ \vert \
\mathcal{F}_n)&=&
\E(P_{n+1}[\sum_{i=1}^{Z_n}X_i + Y_n] \ \vert \ \mathcal{F}_n) \\
&=& P_n\E( f'_{n}(1)^{-1} \sum_{i=1}^{Z_n} X_i \ \vert \ \mathcal{F}_n) +P_n \E(f'(1)^{-1})Y_n \\
&=& P_n\E( f'_{n}(1)^{-1} Z_n \E(X_1 \ \vert \ f_n) \ \vert \ \mathcal{F}_n) +P_n \E(f'(1)^{-1})Y_n \\
&=&P_nZ_n+P_n\E (f'(1)^{-1})Y_n.
\Eea
So $P_nZ_n$ is a submartingale. Moreover
\Bea
\E(P_nZ_n \ \vert \  \mathcal{F}_0)&=& Z_0+\sum_{i=0}^{n-1}\E(f'(1)^{-1})^{i+1}Y_i.
\Eea
By (\ref{bcd}), if $\E(f'(1)^{-1})<1$, $P_nZ_n$ has bounded expectations and then converges a.s. to a finite r.v.
\end{proof}
$\newline$

\section{Ergodicity and convergence for a random cell line}
\label{randline}
Recall that  $(Z_n)_{n\in\N}$ defined in Introduction is the number of
parasites in a random cell line. The Markov chain  $(Z_n)_{n\in\N}$ is a BPRE 
with state dependent immigration. The reproduction law is given by the p.g.f $f$, immigration  in state
 $0$ is distributed as $Y_0$ and immigration in state $k\geq 1$ is distributed as $Y_1$. More precisely, for every 
$n\in \N$, conditionally on $Z_n=x$,
$$Z_{n+1}=Y_{x}^{(n)}+ \sum_{i=1}^{x} X_i^{(n)},   \qquad \t{where}$$
(i)  $(X_i^{(n)})_{i\in\N}$ and $Y_x^{(n)}$ are independent. \\
(ii) Conditionally on $f_n=g$, the $(X_i^{(n)})_{i\in\N}$ are i.i.d. with common probability
generating function $g$. \\
(iii) For all  $x\geq 1$ and $n\in\N$, $Y_x^{(n)}\stackrel{d}{=}
Y_1.$   \\

We have the following results, which generalize  those of
the previous section to the case when immigration depends on whether the state is zero or not.  $\newline$
\begin{Thm} \label{cvrandline}  (i) If $\E\big(\log(f'(1))\big)<0$ and
$\max(\E(\log^+(Y_i)): i=0,1)<\infty$, then there exists a finite random variable $Z_{\infty}$ such that
for every $k \in \N$,
$Z_n$ starting from $k$ converges in distribution to $Z_{\infty}$ as $n\rightarrow \infty$.

Moreover, if there exists $q>0$ such that $\max (\E(Y^q_i): i=0,1)<\infty$, then  for every $\epsilon>0$, there exist
$0<r<1$  and $C>0$
 such that for all  $n\in\N$ and  $k\in\N$,
$$ \sum_{l=0}^{\infty} \vert \P_k(Z_n=l) - \P(Z_{\infty}=l) \vert \leq C k^{\epsilon} r^n.
$$

(ii) If $\E\big(\log(f'(1))\big)\geq 0$ or
$\max(\E(\log^+(Y_i)): i=0,1)=\infty$,
$Z_n$ converges
in probability to infinity as $n\rightarrow \infty$.
\end{Thm}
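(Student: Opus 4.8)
The idea is to reduce to the ordinary IBPRE of Section~\ref{IBPRE} by two couplings and then apply Proposition~\ref{ibpre} and Lemma~\ref{mark}. Let $\tilde Z$ be the ordinary IBPRE with reproduction law $f$ and constant immigration $Y_1$, and $\hat Z$ the ordinary IBPRE with reproduction $f$ and immigration $\hat Y$, where $\hat Y$ has tail $\P(\hat Y>t)=\max(\P(Y_0>t),\P(Y_1>t))$. By (\ref{cond}) one has $0<\P(\hat Y=0)<1$, while $\E(\log^+\hat Y)\leq\E(\log^+Y_0)+\E(\log^+Y_1)$ and $\E(\hat Y^q)\leq\E(Y_0^q)+\E(Y_1^q)$. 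Running $(Z_n)$ alongside $\tilde Z$ through the same environment, reproduction and immigration variables from a common starting state $k\geq 1$, an induction gives $Z_j=\tilde Z_j$ for all $j\leq T_0^Z$ (as long as $Z$ has not returned to $0$ it receives exactly $Y_1$--immigration), so $T_0^Z=T_0^{\tilde Z}$ under $\P_k$ for $k\geq 1$, and hence $\E_0(T_0^Z)=1+\sum_{m\geq 1}\P(Y_0=m)\,\E_m(T_0^{\tilde Z})$. Running $(Z_n)$ alongside $\hat Z$ with the immigration coupled monotonically and the reproduction coupled identically gives $Z_n\leq\hat Z_n$, hence $T_0^Z\leq T_0^{\hat Z}$. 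Finally $0<\P_0(Z_1=0)=\P(Y_0=0)<1$; the chain $(Z_n)$ is irreducible on the states it reaches, each of which reaches $0$ (the BPRE part dies out and the immigration vanishes with positive probability), and aperiodic ($0$ returns to itself in one step with probability $\P(Y_0=0)>0$); so Lemma~\ref{mark} applies to $(Z_n)$.

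\textbf{Convergence in law (part (i), first assertion) and part (ii).} I split according to whether $\E_0(T_0^Z)$ is finite, which I claim happens exactly when $\E(\log(f'(1)))<0$ and $\max(\E(\log^+Y_i):i=0,1)<\infty$. If these hold, then $T_0^Z\leq T_0^{\hat Z}$ and, by Proposition~\ref{ibpre}(i), $\hat Z$ converges in law to a proper limit with positive mass at $0$, hence is positive recurrent with $\E_0(T_0^{\hat Z})<\infty$; so $\E_0(T_0^Z)<\infty$, whence $\E_k(T_0^Z)<\infty$ for all $k$ by irreducibility, and Lemma~\ref{mark}(i) gives the convergence of $Z_n$ to a finite $Z_\infty$ with $\P(Z_\infty=0)>0$, independent of $k$. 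In the complementary cases $\E_0(T_0^Z)=\infty$: if $\E(\log(f'(1)))\geq 0$, then $\E_1(\bar T_0)=\infty$ (as in the proof of Proposition~\ref{ibpre}(i) in the critical/supercritical case, using $\P_1(\bar Z_n>0)\geq c_1/\sqrt n$ from (\ref{eqvltcrtq}) in the critical case), hence $\E_m(T_0^{\tilde Z})\geq\E_m(\bar T_0)=\infty$ for $m\geq 1$ and $\E_0(T_0^Z)=\infty$ since $\P(Y_0\geq 1)>0$; if $\E(\log(f'(1)))<0$ but $\max(\E(\log^+Y_i):i=0,1)=\infty$, the same conclusion follows from the fact that the subcritical BPRE started from $N$ particles stays positive for order $\log N$ steps (a consequence of $\P_1(\bar Z_n>0)\geq\prod_{i<n}(1-f_i(0))$ and the law of large numbers for $\sum_{i<n}\log(1-f_i(0))$), applied to the first $Y_0$--jump from $0$ and to the $Y_1$--immigrants arriving during that first excursion. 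In all these cases Lemma~\ref{mark}(ii) gives $Z_n\to\infty$ in $\P_k$-probability.

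\textbf{Rate in part (i).} Assume in addition $\max(\E(Y_i^q):i=0,1)<\infty$ and fix $\epsilon\in(0,q]$. By estimate (\ref{ctrl}) of Lemma~\ref{mark}(i), it suffices to bound each of $\sup_{n/2\leq l\leq n}|u_l-u_\infty|$, $\E_0(T_0^Z\ind_{T_0^Z>n/4})$ and $\E_k(T_0^Z\ind_{T_0^Z>n/4})$ by $Ck^\epsilon r^n$ for some $r<1$, where $u_l=\P_0(Z_l=0)$. Summing the tail over $m>n/4$ reduces the two expectation terms to
$$\P_0(T_0^Z>m)\leq Cr_0^m,\qquad\P_k(T_0^Z>m)\leq Ck^\epsilon r_0^m\qquad(k,m\in\N)$$
for a suitable $r_0<1$, and the first of these, fed into the renewal theorem with geometric rate for the return time $T_0^Z$ under $\P_0$, gives $|u_l-u_\infty|\leq Cr_0^l$. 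The first bound follows from $T_0^Z\leq T_0^{\hat Z}$ and Proposition~\ref{ibpre}(ii) applied to the subcritical IBPRE $\hat Z$. For the second, with $k\geq 1$, $\P_k(T_0^Z>m)=\P_k(T_0^{\tilde Z}>m)$; decomposing $\tilde Z_n=\bar Z_n^{(k)}+W_n$ into descendants of the $k$ initial parasites and descendants of the immigrants and splitting the interval at $m/2$,
$$\P_k(T_0^{\tilde Z}>m)\leq\P_k(\bar Z_{m/2}>0)+\P\big(W_j\geq 1\ \forall j\in[m/2,m]\big)\leq k\,\P_1(\bar Z_{m/2}>0)+Cm\,\P_0(T_0^{\tilde Z}>m/2),$$
the last step using the renewal structure of the zero set of $W$ and that an excursion covering $[m/2,m]$ has length at least $m/2$. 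Here $\P_0(T_0^{\tilde Z}>m/2)\leq ce^{-dm/2}$ by Proposition~\ref{ibpre}(ii), and $\P_1(\bar Z_{m/2}>0)\leq\rho^{m/2}$ by the exponential decay of the survival probability of the subcritical BPRE, so $\P_k(T_0^{\tilde Z}>m)\leq k\rho^{m/2}+C'r_1^m$; finally $k\rho^{m/2}\leq Ck^\epsilon r_0^m$ for suitable $r_0<1$, by splitting at $m\asymp\epsilon^{-1}\log k$ (bound the left side by $1\leq k^\epsilon r_0^m$ below the threshold, taking $r_0$ close enough to $1$, and by $\rho^{\epsilon'm}\leq r_0^m$ above it).

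\textbf{Main obstacle.} The couplings, the appeals to Proposition~\ref{ibpre} and Lemma~\ref{mark}, and the renewal input are routine; the delicate part is the rate, specifically the $k^\epsilon$ dependence uniform in both $n$ and $k$. One must interpolate between the regime $n\lesssim\epsilon^{-1}\log k$, where the $k$ independent subcritical lineages are still being cleared, and the regime $n\gtrsim\epsilon^{-1}\log k$, where only the exponentially--tailed immigrant excursions remain, and choose the rate $r$ very close to $1$; this relies on the exponential decay of $\P_1(\bar Z_n>0)$ for the subcritical BPRE together with a clean form of the excursion estimate for the immigrant process $W$.
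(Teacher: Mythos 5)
Your overall architecture is the same as the paper's: reduce to ordinary IBPRE by coupling, feed Proposition \ref{ibpre} into Lemma \ref{mark}, use a Kendall-type renewal estimate for $|u_l-u_\infty|$, and treat the infinite-log-moment cases via the survival bound $1-f_{n-1}\circ\cdots\circ f_0(0)\geq\prod_{i<n}(1-f_i(0))$ and the law of large numbers. Where you genuinely differ is the mechanism producing the factor $k^{\epsilon}$ in the rate: the paper inflates the immigration by an auxiliary heavy-tailed variable $\w{Y}$ so that $\P(Y\geq k)\geq\beta k^{-\epsilon}$, and then gets $\P_k(\w{T}_0\geq n)\leq\beta^{-1}k^{\epsilon}\P_0(\w{T}_0>n)$ in one line; you instead split the IBPRE started from $k$ into the progeny of the $k$ initial particles plus the immigrant part and trade the factor $k$ for $k^{\epsilon}$ by degrading the geometric rate. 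That interpolation is fine in principle, but it stands on a step you assert as a known fact and which is false in this generality: the exponential decay $\P_1(\bar Z_n>0)\leq\rho^{n}$ of the survival probability of a subcritical BPRE does \emph{not} follow from $\E(\log f'(1))<0$ alone (with a heavy right tail of $\log f'(1)$ the annealed survival probability decays only polynomially), and the theorem's hypotheses impose moment conditions only on $Y_0,Y_1$, not on the environment. Under these hypotheses the exponential decay can only be extracted from Proposition \ref{ibpre} (ii) itself, e.g. via $\P_0(\w{T}_0>n+1)\geq\P(\w{Y}\geq1)\P_1(\bar Z_n>0)$ for an auxiliary IBPRE with bounded immigration; your proposal never makes this reduction, so as written the key estimate $k\rho^{m/2}$ is unsupported. (The paper's construction with $\w{Y}$ avoids the issue entirely, since it only ever uses the return-time tail of the coupled IBPRE.)

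A second gap is the claim ``$\E_k(T_0^Z)<\infty$ for all $k$ by irreducibility.'' The chain is not irreducible on $\N$: if $Y_0,Y_1$ and the offspring sizes are bounded, large states $k$ may be unreachable from $0$, and your first-step identity $\E_0(T_0^Z)=1+\sum_m\P(Y_0=m)\E_m(T_0^{\w Z})$ only yields finiteness for $m$ in the support of $Y_0$; since Lemma \ref{mark} (i) needs $\E_k(T_0)<\infty$ for \emph{every} $k$, an extra argument is required. This is exactly where the paper inserts its second coupling (replace $Y_0$ by $\w{Y}_0$ stochastically larger with $\P(\w{Y}_0\geq k)>0$ and $\E(\log^+\w{Y}_0)<\infty$, rerun the CASE 1 argument for the modified chain, then use monotonicity in the initial state); your proposal is missing this idea. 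A minor further point: your blanket justification that every state reaches $0$ (``the BPRE part dies out'') fails in the supercritical case, where $\P_l(T_0<\infty)$ can vanish; there one must argue as the paper does through Proposition \ref{ibpre} rather than through Lemma \ref{mark} (ii) applied directly to $(Z_n)$. The remaining ingredients (the coupling $Z\leq\hat Z$, the identity $T_0^Z=T_0^{\w Z}$ under $\P_k$ for $k\geq1$, the last-zero decomposition for the immigrant part, and the treatment of $\E(\log^+Y_i)=\infty$) are sound and match the paper's CASE 1--4 structure.
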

$\newline$
Note again that by Borel-Cantelli lemma, if  $\E(\log^+(Y_1))=\infty$, then for every $c>1$,
$$\limsup_{n\rightarrow \infty} c^{-n}Z_n = \infty \quad \t{a.s.},$$
since $Z_n\geq Y_n$ a.s. \\

 The proof of (ii) in the critical or supercritical case ($\E\big(\log(f'(1))\big)\geq 0$) is directly derived from
Proposition \ref{ibpre} and we focus now on the subcritical case:
$$\E\big(\log(f'(1))\big)<0.$$
Recall that  $T_0$ is the first time after $0$  when $(Z_n)_{n\in\N}$
visits $0$. Using IBPRE (see Section \ref{IBPRE}), we prove the following
result in the subcritical case.
$\newline$
\begin{Lem}
\label{lem} If  $ \max(\E(\log^+(Y_i)): i=0,1)<\infty$, then for
every $k\N$, $\P_k(T_0<\infty)=1$ and 
$$\sup_{n\in\N}\{\P_k(Z_n\geq l)\}\stackrel{l\rightarrow \infty}{\longrightarrow }0.$$
$\newline$
 Moreover if   there exists $q>0$ such that $\max (\E(Y^q_i)
: i=0,1)<\infty$, then for every $\epsilon>0$, there exist $r>0$ and
$C>0$ such that for  all $n \in \N$, $k\geq 1$: 
$$\P_0(T_0\geq n)\leq Cr^n, \quad 
\P_k(T_0\geq n)\leq  Ck^{\epsilon}r^n. 
$$
\end{Lem}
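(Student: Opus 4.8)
The plan is to reduce the state-dependent-immigration process $(Z_n)_{n\in\N}$ to an ordinary IBPRE, to which Proposition \ref{ibpre} applies. First I would construct a dominating IBPRE $(\widetilde Z_n)_{n\in\N}$ with the same reproduction p.g.f.\ $f$ and immigration law $\widetilde Y$ chosen so that $\widetilde Y$ stochastically dominates both $Y_0$ and $Y_1$ (e.g. $\widetilde Y = \max(Y_0,Y_1)$, or any law with $\P(\widetilde Y=0)>0$ dominating both, using $(\ref{cond})$ to keep an atom at $0$). A monotone coupling — built by adding the extra immigrants generation by generation — gives $Z_n \leq \widetilde Z_n$ a.s.\ for all $n$ when both start from the same state. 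Since $\E(\log(f'(1)))<0$ and, in the hypothesis of the first part, $\E(\log^+(\widetilde Y))<\infty$ (because $\log^+ \max(Y_0,Y_1) \le \log^+ Y_0 + \log^+ Y_1$), Proposition \ref{ibpre}(i) tells us $\widetilde Z_n$ converges in distribution to a finite limit. Domination then immediately gives tightness of $(Z_n)$: for every $k$, $\sup_n \P_k(Z_n \ge l) \le \sup_n \P_k(\widetilde Z_n \ge l) \to 0$ as $l\to\infty$, which is the second displayed claim. For $\P_k(T_0<\infty)=1$: once the chain reaches $0$ it has probability $\ge \P(Y_0=0)^2 \cdot (\text{no-reproduction is vacuous})>0$... more carefully, from any state the dominating subcritical IBPRE hits $0$ a.s.\ (subcritical BPRE goes extinct a.s.\ by \cite{at}, and immigration with an atom at $0$ does not destroy this — this is part of Proposition \ref{ibpre}(i)/(ii)), hence so does $(Z_n)$ by the coupling, giving $\P_k(T_0<\infty)=1$.

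For the second (quantitative) part, under $\max(\E(Y_i^q))<\infty$ we have $\E(\widetilde Y^q) = \E(\max(Y_0,Y_1)^q) \le \E(Y_0^q)+\E(Y_1^q)<\infty$, so Proposition \ref{ibpre}(ii) applies to $\widetilde Z_n$: there are $c,d>0$ with $\P(\widetilde T_0 > n) \le c e^{-dn}$, where $\widetilde T_0$ is the return time to $0$ of the dominating IBPRE started from $0$. By the coupling started from $0$, $T_0$ (for $Z$) is dominated by... here one must be slightly careful, because $\widetilde Z_n=0$ does not force $Z_n=0$; rather $Z_n \le \widetilde Z_n$ means $\widetilde Z_n = 0 \Rightarrow Z_n = 0$, so $T_0 \le \widetilde T_0$ under the coupling, giving $\P_0(T_0 \ge n) \le c e^{-dn} =: Cr^n$ with $r=e^{-d}$.

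The remaining, and genuinely more delicate, point is the bound $\P_k(T_0 \ge n) \le C k^\epsilon r^n$ with polynomial-in-$k$ (indeed $k^\epsilon$) dependence on the starting state. The strategy is: from $k$ parasites, the BPRE part is a sum of $k$ independent one-particle subcritical BPREs, which die out individually; the time for all $k$ lineages to be simultaneously extinct is controlled by the maximum of $k$ i.i.d.\ extinction times, and subcriticality gives these extinction times an exponential (or at least super-polynomial) tail, so their maximum is $O(\log k)$ with overwhelming probability. Concretely I would split $\{T_0 \ge n\}$ according to whether the dominating IBPRE from $k$ has returned to $0$ by time $n/2$: writing $\tau_k$ for that return time, $\P_k(\tau_k > n/2) \le \P(\text{some of the } k \text{ founding lineages, or some immigrant lineage, survives past } n/2)$. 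Using $1-f_{m-1}\circ\cdots\circ f_0(0) \le \exp(S_m)$ with $S_m$ a random walk of negative drift (exactly the estimate already used in the proof of Proposition \ref{ibpre}(i) above) and a union bound over the $k$ lineages, one gets $\P_k(\tau_k > m) \lesssim k\, \E(e^{S_m}) + (\text{immigrant contribution, already exponential})$; choosing $\epsilon$ and then $\theta\in(0,1)$ so that $\E(e^{\theta S_1})<1$, a Chernoff-type bound on $\E(e^{S_m})$ turns $k$ into $k^\epsilon$ at the cost of shrinking the exponential rate. Then the Markov property at time $\tau_k \wedge (n/2)$ combined with the already-established $\P_0(T_0 \ge n/2)\le Cr^{n/2}$ closes the estimate. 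The main obstacle is precisely this book-keeping — matching the $k$-dependence to $k^\epsilon$ uniformly in $n$ while keeping a clean exponential rate $r^n$ — and it is handled entirely by the random-walk large-deviation estimate on $S_m=\sum_{i<m}\log(1-f_i(0))$ that the paper has already introduced, together with the subcritical tail of Proposition \ref{ibpre}(ii); no new tool is needed beyond careful splitting and a union bound.
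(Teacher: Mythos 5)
Your first half is exactly the paper's argument: couple $(Z_n)_{n\in\N}$ with a dominating IBPRE whose reproduction law is still $f$ and whose immigration dominates both $Y_0$ and $Y_1$, keep an atom at $0$, and apply Proposition \ref{ibpre} (i) for tightness and $\P_k(T_0<\infty)=1$, and Proposition \ref{ibpre} (ii) for $\P_0(T_0\geq n)\leq Cr^n$ via $T_0\leq \widetilde T_0$ under the coupling. Where you diverge is the bound $\P_k(T_0\geq n)\leq Ck^{\epsilon}r^n$, and there the proposal has two genuine gaps. First, the splitting event is wrong: if you bound $\P_k(\tau_k>n/2)$ by the probability that some founding \emph{or some immigrant} lineage is alive at time $n/2$, the immigrant term does not decay in $n$ at all --- it is essentially $\P_0(Z_{n/2}>0)$, which converges to $\P(Z_\infty>0)>0$. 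What you actually need is a time at which the founders are extinct \emph{and simultaneously} the immigration-driven component is at $0$; controlling that requires a further renewal-type argument over an interval (probability of no zero of the immigration chain in $[n/2,n]$), not a union bound on lineage survival at a single time. Second, the per-lineage estimate you invoke is not available under the stated hypotheses: the paper's convexity inequality goes the other way ($1-f_{m-1}\circ\cdots\circ f_0(0)\geq \prod_{i<m}(1-f_i(0))$, a \emph{lower} bound on survival), and the Chernoff step you propose would need some $\theta>0$ with $\E(f'(1)^{\theta})<1$, i.e. a moment assumption on the environment that does not follow from $\E(\log f'(1))<0$ (the lemma assumes moments only on $Y_0,Y_1$). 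So, as written, the exponential-in-$m$ decay of a single subcritical-BPRE lineage, and hence the whole $k^{\epsilon}r^n$ book-keeping, is not justified. (This particular point could be repaired inside your framework, e.g. $\P_1(\bar Z_n>0)\leq \P(Y\geq 1)^{-1}\P_0(\widetilde T_0>n+1)$, but the non-decaying immigrant term would still have to be fixed separately.)

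The paper avoids all of this with one trick you are missing: it enlarges the dominating immigration to $Y=\max(Y_0,Y_1,\widetilde Y)$ where $\widetilde Y$ is an auxiliary independent variable with $\P(\widetilde Y=0)=1/2$ and a heavy polynomial tail $\P(\widetilde Y=m)=\alpha m^{-1-\epsilon}$, so that $\E(Y^{q'})<\infty$ for $q'=\min(q,\epsilon/2)$ and Proposition \ref{ibpre} (ii) still gives $\P_0(\widetilde T_0>n)\leq ce^{-dn}$. The point of the heavy tail is the inequality $\P_0(\widetilde T_0>n)\geq \P(Y\geq k)\,\P_k(\widetilde T_0\geq n)$ (from $0$, at least $k$ immigrants arrive in one step with probability $\P(Y\geq k)\geq \beta k^{-\epsilon}$, and the chain is monotone in its initial state), which converts the exponential bound at $k=0$ into $\P_k(T_0\geq n)\leq \P_k(\widetilde T_0\geq n)\leq \beta^{-1}ck^{\epsilon}e^{-dn}$ with no analysis of individual lineages, no large deviations for the environment walk, and no simultaneity issue. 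That transfer-from-$0$-to-$k$ idea is the missing ingredient in your proposal.
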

\begin{proof}
We couple $(Z_n)_{n\in\N}$ with an IBPRE $(\w{Z}_n)_{n\in\N}$ with
reproduction law given by the random p.g.f $f$ (such as
$(Z_n)_{n\in\N}$) and immigration  $Y$  defined by
$$Y:= \max (Y_0,  \ Y_1, \ \w{Y}),$$
where $Y_0,  \ Y_1,$ and $\w{Y}$ are independent and $\w{Y}$ is defined by
$$\P( \w{Y}= 0)=1/2; \quad  \forall n \in \N^*, \quad \P( \w{Y}= n)= \alpha n^{-1-\epsilon}, \quad \alpha:=[2\sum_{i=1}^{\infty} i^{-1-\epsilon}]^{-1}.$$
Thus immigration   $Y$ for $\w{Z}_n$ is stochastically larger than immigration for $Z_n$ (whereas reproduction law is the same), so that coupling gives
$$\forall n \in \N, \quad Z_n\leq \w{Z}_n \qquad \t{a.s.}$$
Moreover, $\w{Z}_n$ is still subcritical. Recalling that
$\min(\P(Y_i=0): i=0,1)>0$,  $\P( \w{Y}= 0)=1/2$, and that the
expectation of the logarithm of every  r.v. is finite, we have
$$\E(\log^+(Y))<\infty, \qquad \P(Y=0)>0.$$
Then Proposition \ref{ibpre} (i)  ensures that $\w{Z}_n$ converges
in distribution to a finite random
 variable, so that
$$\sup_{n\in\N}\{\P_k(Z_n\geq l)\}
\leq \sup_{n\in\N}\{\P_k(\w{Z}_n\geq
l)\}\stackrel{l\rightarrow \infty}{\longrightarrow}0 .$$
Proposition
\ref{ibpre} (i)  ensures also that for every $k\in \N$, $\lim_{n\rightarrow \infty}
\P_k(Z_n=0)>0$. Thus, for every
$k\in\N$, $\P_k(\w{T}_0<\infty)=1$ and then $\P_k(T_0<\infty)=1$. This completes the first part of the lemma. \\ \\

We assume now that there exists $q>0$ such that $\max (\E(Y_i^q): i=0,1)<\infty$. Moreover $\E(\w{Y}^{\epsilon/2})<\infty$, so letting $q'=\min(\epsilon/2, q)$,  we have
$$\E(Y^{q'})<\infty.$$
We can then apply Proposition \ref{ibpre} (ii) to IBPRE $(\w{Z}_n)_{n\in\N}$, so that there exist
$c,d>0$ such that for every $n\in\N$,
$$\P_0(\w{T_0} > n)\leq ce^{-dn}.$$
Recalling that  for all $k,n\in\N$,
$$\P_k(T_0\geq n)\leq \P_k(\w{T}_0\geq n),$$
we get $\P_0(T_0\geq n)\leq ce^{-dn}$. Moreover  for every $k\in\N$,
$$\P_0(\w{T_0} > n)\geq \P(Y\geq k)\P_k(\w{T_0} \geq  n).$$
By definition of $Y$,   there exists $\beta>0$ such that for every
$n\in\N$,
$$\P(Y\geq n)\geq \beta n^{-\epsilon}.$$
Using these inequalities gives 
$$
\P_k( T_0 \geq  n)\leq  \P_k(\w{T_0} \geq  n) \leq  \beta^{-1}k^{\epsilon}\P_0(\w{T_0} > n)
\leq  \beta^{-1}ck^{\epsilon}e^{-dn}.
$$
This completes the proof.
\end{proof}
$\newline$
\begin{proof}[Proof of Theorem \ref{cvrandline} (i) and (ii) in the subcritical case: $\E\big(\emph{log}(f'(1))\big)<0$.]
We split the proof into  $4$ cases: \\ \\
CASE 1: $\max(\E(\log^+(Y_i)) : i=0,1)<\infty$. \\ \\
CASE 2: There exists $q>0$ such that  $\max (\E(Y^q_i): i=0,1)<\infty$. \\ \\
CASE 3: $\E(\log^+(Y_1))=\infty$. \\ \\
CASE 4: $\E(\log^+(Y_0))=\infty$. \\ \\

First, note that $\P(Y_0=0)>0$ ensures that $\P_0(Z_1=0)>0$ and we can use results of Section \ref{MARK}. \\

CASE 1. In this case, by Lemma \ref{lem}, $(Z_n)_{n\in\N}$ is bounded in distribution:
$$\sup_{n\in\N}\{\P_0(Z_n\geq l)\}\stackrel{l\rightarrow \infty}{\longrightarrow }0.$$
If $\E_0(T_0)=\infty$, then $Z_n\rightarrow \infty$ in $\P_0$ by
Lemma \ref{mark} (ii), which is in contradiction
with the previous limit. \\
Then  $\E_0(T_0)<\infty$. We prove now that $\forall k\geq 1$,
$\E_k(T_0)<\infty$ by a coupling argument. Let $k\geq 1$ and change
only immigration to get a Markov process
 $(\w{Z}_n)_{n\in\N}$ which is larger than $(Z_n)_{n\in\N}$:
 $$\forall n\in\N, \qquad \w{Z}_n\geq Z_n \ \t{a.s.}$$
Its  immigrations $\w{Y}_0$ and $\w{Y}_1$ satisfy
$$\w{Y}_1\stackrel{d}{=} Y_1, \quad  \forall n\in \N, \ \P(\w{Y}_0\geq n)\geq \P(Y_0\geq n),$$
$$\P(\w{Y}_0\geq  k)>0, \quad  \max(\E(\log(\w{Y}_i): i=0,1)<\infty.$$
Then, we have again $\E_0(\w{T}_0)<\infty$, which entails that $\E_k(\w{T}_0)<\infty$ since  $\P(\w{Y}_0\geq  k)>0$. As for every $n\in\N$,
$\w{Z}_n\geq Z_n$ a.s., we have
$$\E_k(T_0)\leq \E_k(\w{T}_0)<\infty.$$
Then Lemma \ref{mark} (i) ensures that for every $k\in \N$,
$(Z_n)_{n\in\N}$ converges in distribution to a finite random variable $Z_{\infty}$, which does not depend
on $k$ and verifies $\P(Z_{\infty}=0)>0$. \\ 

CASE 2:  By Lemma \ref{mark} (i), we have
\be
\label{majsom}
\sum_{l\in\N} \vert \P_k(Z_n=l)- \P(Z_{\infty}=l) \vert
\leq   A\big[\sup_{n/2\leq l\leq n}\{\vert u_{l}- u_{\infty}\vert \} +
\E_0(T_0\ind_{T_0>n/4})+\E_k(T_0\ind_{T_0>n/4})\big].
\ee
Moreover by Lemma \ref{lem}, for every $\epsilon>0$, there exists $C>0$ such that
\be
\label{majun}
\P_k(T_0\geq n)\leq  Ck^{\epsilon}r^n, \qquad
\P_0(T_0\geq n)\leq Cr^n.
\ee
So for every $r'\in (r,1), \ \E_0(\exp( -\log(r) T_0))<\infty$. Then, by
Kendall renewal theorem \cite{kendall}, there  exists $\rho \in (0,1)$ and $c>0$ such that
for every $n\in\N$,
\be
\label{geom}
\vert u_n - u_{\infty} \vert \leq c\rho^{n}.
\ee
Finally, (\ref{majun})  ensures that there exists  $D>0$ such that for every $n\in\N$,
\Bea
\E_0(T_0\ind_{T_0>n/4})&\leq & Dnr^{n/4},\\
\E_k(T_0\ind_{T_0>n/4}) &\leq & Dnk^{\epsilon}r^{n/4} .
\Eea
Combining these two inequalities with $(\ref{majsom})$ and $(\ref{geom})$, we get
$$\sum_{l\in\N} \vert \P_k(Z_n=l)- \P(Z_{\infty}=l) \vert
\leq   A\big[c\rho^n+Dnr^{n/4}+ Dnk^{\epsilon}r^{n/4}\big],$$
which ends the proof in CASE 2. \\ \\

CASE 3. Change immigration of $(Z_n)_{n \in\N}$ to get an IBPRE
$(\w{Z}_n)_{n\in\N}$ whose immigration is distributed as $Y_1$ and
whose reproduction law is still given by $f$. Then Proposition
\ref{ibpre} (i) and $\E(\log^+(Y_1))=\infty$
ensures that $(\w{Z}_n)_{n\in\N}$ starting from $0$ tends in distribution to $\infty$. \\
Then Lemma \ref{mark} (i) entails  that $\E_0(\w{T}_0)=\infty$, so that for every $k\geq 1$,
$$\E_k(\w{T}_0)\geq \E_0(\w{T}_0)=\infty,$$
since the IBPRE $(\w{Z}_n)_{n\in\N}$ starting from $k\geq 1$ is stochastically larger than 
$(\w{Z}_n)_{n\in\N}$ starting from $0$. \\ \\
Moreover, under $\P_k$, $(Z_n)_{n \in\N}$ is equal to
$(\w{Z}_n)_{n\in\N}$ until time $T_0=\w{T}_0$. So $\E_k(T_0)=\infty$.
 Let $k\geq 1$ such that
$\P_0(Z_1=k)>0$, then $\E_0(T_0)\geq \P_0(Z_1=k)\E_k(T_0-1)$. This
entails that
$$\E_0(T_0)=\infty.$$
By Lemma \ref{mark} (ii),  $(Z_n)_{n \in\N}$ starting from any $k\in\N$ tends to $\infty$ in probability. \\ \\

CASE 4. Denote by
$$X_i:=\P(Z_{i}>0 \ \vert \ Z_{i-1}=1, \ f_{i-1}), \quad (i\geq1),$$
the survival probability in environment $f_{i-1}$ and introduce the following random walk
$$S_n=\sum_{i=1}^{n}\log(X_i).$$
Then
$$\P_1(Z_n>0 \ \vert \ (f_0,f_1,...,f_{n-1}))\geq \Pi_{1}^n X_i=\exp(S_n) \quad \t{a.s.},$$
so that
\Bea
\P_k(Z_n>0  \ \vert \ (f_0,f_1,...,f_{n-1}))&=& 1-\P_k(Z_n=0  \ \vert \ (f_0,f_1,...,f_{n-1})) \\
&=&1-[1-\P_1(Z_n>0  \ \vert \ (f_0,f_1,...,f_{n-1}))]^k \\
&\geq & 1-[1-\exp(S_n)]^k \quad \t{a.s.} \Eea Thus
$$\P_k(Z_n>0)\geq \E(1-[1-\exp(S_n)]^k).$$
Using  the Markov property
 we have
\Bea
\E_0(T_0+1)&\geq & \sum_{k=1}^{\infty} \P(Y_0=k)\E_k(T_0) \\
&= & \sum_{k=1}^{\infty} \P(Y_0=k) \sum_{n=1}^{\infty} \P_k(T_0\geq n) \\
&\geq & \sum_{k=1}^{\infty} \P(Y_0=k)\sum_{n=1}^{\infty} \P_k(Z_n>0) \\
&\geq & \sum_{k=1}^{\infty} \P(Y_0=k)\sum_{n=1}^{\infty}
\E(1-[1-\exp(S_n)]^k). \Eea Moreover for all  $x\in[0,1[$ and $k\geq
0$, $\exp(k\log(1-x))\leq \exp(-kx),$  and by the law of large
numbers, $S_n/n$ tends a.s. to $\E(X_1)<0$ so that there exists $n_0\geq 1$ such that for every $n\geq n_0$,
$$\P(S_n/n \geq 3\E(X_1)/2)\geq 1/2.$$
We  get then
\Bea
\E_0(T_0+1)
&\geq &  \sum_{k=1}^{\infty} \P(Y_0=k)\sum_{n=1}^{\infty} \E(1-\exp(-k\exp(S_n))) \\
&\geq &  [1-e^{-1}] \sum_{n=1}^{\infty} \sum_{k=1}^{\infty} \P(k\exp(S_n)\geq 1)\P(Y_0=k)  \\
&\geq & [1-e^{-1}] \sum_{n=n_0}^{\infty}\P(S_n/n \geq 3\E(X_1)/2) \sum_{k\geq \exp(-3n\E(X_1)/2)}^{\infty}
\P(Y_0=k)  \\
&\geq & 2^{-1}[1-e^{-1}] \sum_{n=n_0}^{\infty}
 \P(Y_0 \geq \exp(-3n\E(X_1)/2))\\
 &\geq & 2^{-1}[1-e^{-1}] \sum_{n=n_0}^{\infty}
 \P(\beta\log(Y_0) \geq n),
\Eea where $\beta:=[-3\E(X_1)/2]^{-1}>0$. Then
$\E(\log(Y_0))=\infty$  ensures that $\E_0(T_0+1)=\infty$, so
$$\E_0(T_0)=\infty.$$
Conclude that $(Z_n)_{n\in\N}$ tends to $\infty$ in $\P_k$ using Lemma \ref{mark} (ii).
\end{proof}
$\newline$

\section{Asympotics for proportions of cells with a given number of parasites}
\label{prop}

\subsection{Asymptotics  without contamination}
\label{wcont} Here there is no contamination, i.e. $Y_0=Y_1=0$ a.s.
and we determine when the organism recovers, meaning that the number
of contaminated cells becomes negligible compared to the total
number of cells. We get  the same result as Theorem 1 in
\cite{vbk} for the more general model considered here. Denote by
$N_n$ the number of contaminated cells.
\begin{Pte} \label{propsscont}$N_n/2^n$ decreases as $n$ grows. \\
If $\E(\log(f'(1)))\leq 0$, then $N_n/2^n\rightarrow 0$ a.s.  as $n\rightarrow \infty$. \\
Otherwise, $N_n/2^n\rightarrow 0$ as $n\rightarrow \infty$ iff all parasites die out, which
happens with a probability less than $1$.
\end{Pte}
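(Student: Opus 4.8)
I would first record that, in the absence of contamination, $Z_{\mathbf i}=0$ forces $Z_{\mathbf i0}=Z_{\mathbf i1}=0$: the sum of offspring is empty and nothing is added from outside. Hence every non-infected cell has only non-infected descendants, so each infected cell of $\GG_{n+1}$ is a daughter of an infected cell of $\GG_n$, and therefore $N_{n+1}\le 2N_n$, i.e.\ $(N_n/2^n)_n$ is non-increasing and converges a.s.\ to some $N_\infty\in[0,1]$. Next I would compute $\E_k(N_n)$. Fix $\mathbf i=(i_1,\dots,i_n)\in\GG_n$; the integer $Z_{\mathbf i}$ is built by a BPRE-type recursion in which the environment used at step $j$ is $f^{(i_j)}$ of the ancestor of level $j-1$, and these are independent with the step-$j$ one distributed as $f^{(i_j)}$. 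Averaging $\P_k(Z_{\mathbf i}\ge1)$ over the $2^n$ cells of $\GG_n$ is the same as replacing the deterministic word $(i_j)$ by the uniform i.i.d.\ word $(a_j)$, whose mixed environment is precisely $f$, so $\E_k(N_n)=\sum_{\mathbf i\in\GG_n}\P_k(Z_{\mathbf i}\ge1)=2^n\,\P_k(Z_n\ge1)$, where $(Z_n)_n$ on the right is the BPRE with reproduction law $f$ and \emph{no} immigration of Section~\ref{BPRE}.

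\textbf{Step 2 (subcritical/critical case).} If $\E(\log f'(1))\le0$, that BPRE becomes extinct a.s.\ \cite{at}, so $\P_k(Z_n\ge1)\downarrow0$; by bounded convergence $\E_k(N_\infty)=\lim_n\E_k(N_n)/2^n=0$, and $N_\infty\ge0$ forces $N_\infty=0$ a.s. This settles the a.s.\ recovery.

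\textbf{Step 3 (supercritical case: the easy implication and the probability).} Assume now $\E(\log f'(1))>0$ and $k\ge1$. If all parasites die out then $N_n=0$ eventually, hence $N_n/2^n\to0$; that is one implication. For the probability: the random cell line $(Z_n)_n$ is a supercritical BPRE without immigration, so its quenched extinction probability $\theta$ (given the whole environment) satisfies $\theta<1$ a.s.\ and $\P_x(\text{extinction})=\E[\theta^x]\to0$ as $x\to\infty$ \cite{at}; in particular $\gamma_k:=\P_k(Z_n\ge1\ \forall n)=1-\E[\theta^k]>0$. Since survival of this line exhibits an infinite infected line in $\TT$, one gets $\P_k(\text{all parasites die out})\le1-\gamma_k<1$.

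\textbf{Step 4 (supercritical case: the reverse implication — the crux).} It remains to prove $N_\infty>0$ a.s.\ on $\{\text{not all parasites die out}\}$. Writing $\mathcal F_n=\sigma((Z_{\mathbf i})_{|\mathbf i|\le n})$, the branching structure gives $\E(N_\infty\mid\mathcal F_n)=2^{-n}\sum_{\mathbf i\in\GG_n}\gamma_{Z_{\mathbf i}}$, where $\gamma_x:=\E_x(N_\infty)=\lim_m\P_x(Z_m\ge1)=1-\E[\theta^x]$ is non-decreasing with $\gamma_x\to1$ as $x\to\infty$; since $0\le N_\infty\le1$ this already yields $N_\infty\to1$ in $\P_x$-probability, hence $\phi_x:=\P_x(N_\infty>0)\to1$. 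The plan is then to combine this with: (a) on the survival event the parasite number along a surviving line is unbounded, so with probability one some cell eventually carries arbitrarily many parasites; and (b) the self-similar identity $\P_x(N_\infty=0)=\E_x[\,\P_{Z_0}(N_\infty=0)\,\P_{Z_1}(N_\infty=0)\,]$, which also holds with $\P_\bullet(N_\infty=0)$ replaced by $\P_\bullet(\text{all parasites die out})$. Decomposing $\{N_\infty=0\}$ along generations — using $\gamma$ non-decreasing to bound $N_\infty$ below by $\gamma_M\,\limsup_n 2^{-n}\#\{\mathbf i\in\GG_n:Z_{\mathbf i}\ge M\}$ — and iterating (b) down a surviving line while feeding in (a) should give $\P_k(N_\infty=0)=\P_k(\text{all parasites die out})$, i.e.\ $\{N_\infty=0\}=\{\text{all parasites die out}\}$ a.s. The honest difficulty is exactly this last upgrade: turning ``once there are many parasites the infection invades a positive fraction of cells with high probability'' into an almost-sure dichotomy, while controlling the period during which the parasite number may remain small along many competing lines. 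This is the step performed, in the Galton--Watson case, in Theorem~1 of \cite{vbk}, and I would adapt that argument; the monotonicity of $N_n/2^n$ and the identity $\E_k(N_n)=2^n\P_k(Z_n\ge1)$ from Step~1 are precisely what make the adaptation go through.
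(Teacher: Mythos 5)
Your Steps 1--3 are correct and coincide with the paper's argument: monotonicity of $N_n/2^n$, the many-to-one identity $\E(N_n)/2^n=\P(Z_n>0)$ for the random cell line (a BPRE with reproduction law $f$ and no immigration), hence a.s.\ recovery when $\E(\log f'(1))\leq 0$ by bounded convergence, and in the supercritical case the easy implication plus the fact that extinction of parasites has probability less than one. The problem is Step 4. The statement you must prove there is exactly the zero-one law $\{\lim_n N_n/2^n>0\}=\{\forall n:\ P_n>0\}$ a.s., and what you offer is a plan, not a proof: you write ``should give'' and ``I would adapt that argument''. Moreover the two ingredients of the plan are themselves problematic. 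Ingredient (a) --- that on survival some cell eventually carries arbitrarily many parasites --- is neither proved nor what is needed, and it is not obviously true in general (parasites may survive while spreading thinly over many cells; nothing in the hypotheses forces a single cell line to accumulate an unbounded load). And the bound $N_\infty\geq \gamma_M\,\limsup_n 2^{-n}\#\{\mathbf{i}\in\GG_n: Z_{\mathbf{i}}\geq M\}$ (valid via martingale convergence of $\E(N_\infty\mid\mathcal{F}_n)$) merely reduces the claim to showing that a positive asymptotic fraction of cells is heavily infected on the survival event, which is essentially the statement to be proved; iterating the self-similar identity (b) ``down a surviving line while feeding in (a)'' is precisely the part that has to be carried out, and it is missing.

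For comparison, the paper closes this gap with a concrete two-step argument that does not involve heavily infected cells at all. First, conditionally on non-extinction, for every $K$ there is a.s.\ a generation $n$ with $N_n\geq K$: fixing $p$ with $2^p\geq K$ and $q:=\P_1(N_p\geq K)>0$, one repeatedly restarts from an infected cell of the current generation, obtaining independent trials each succeeding with probability at least $q$, so success occurs a.s. Second, stopping at the first such generation and picking one parasite in each of $K$ infected cells, the branching property yields $K$ i.i.d.\ limits $L^{(i)}$ with $\P(L^{(i)}>0)=\P(L>0)>0$ (positive because $\E(L)=\lim_n\P(Z_n>0)>0$ in the supercritical case), and $\lim_n N_n/2^n$ dominates a positive multiple of $\max_i L^{(i)}$; hence the probability of recovery despite non-extinction is at most $\P(L=0)^K$, which tends to $0$ as $K\to\infty$. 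If you want to salvage your write-up, replace ingredient (a) by this ``a.s.\ at least $K$ infected cells eventually'' statement and run the restart-plus-independence argument; as it stands, the supercritical reverse implication is a genuine gap.
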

\begin{ex}
Consider the case of the random binomial repartition of parasites
mentioned in Introduction. Let $Z \in\N$ be a r.v and $(P_{\i})_{\i
\in\TT}$ be an i.i.d. sequence distributed as a  r.v. $P\in[0,1]$, such
that $P\stackrel{d}{=}1-P$. In every generation, each parasite gives
birth independently to a random number of parasites distributed as
$Z$. When the cell $\i$ divides, conditionally on $P_{\i}=p$,  each
parasite of the cell $\i$  goes independently in the first daughter
cell with probability $p$ (or it goes in the second daughter cell,
which happens with probability $1-p$). Then,
$$\P(f'(1) \in \d x)=\P(\E(Z)P \in \d x).$$
Thus, the organism recovers a.s. (i.e. $N_n/2^n$ tends a.s. to $0$)
iff
$$\log(\E(Z))\leq \E(\log(1/P)).$$
This is the same criteria in the case when the offspring of each
parasite goes a.s. is the same  daughter cell (there, $p$
is the probability that this offspring goes in the first daughter cell.)  \\
\end{ex}
$\newline$
\begin{proof}
Note that $N_n/2^n$ decreases to $L$ as $n\rightarrow \infty$, since
one infected cell has at most two daughter cells which are infected.
Moreover,   for every $n\in \N$, \Bea
\E\left(\frac{N_n}{2^n}\right)&=&\frac{\E(\sum_{i\in\GG_n} \ind_{Z_{\b{i}}>0})}{2^n} \\
&=& \sum_{\b{i}\in\GG_n} \frac{1}{2^n} \E(\ind_{Z_{\b{i}}>0}) \\
&=& \sum_{\b{i}\in\GG_n} \P((a_0,...,a_{n-1})=\i) \P(Z_{\b{i}}>0) \\
&=&\P(Z_n>0).
\Eea

If $\E(\log(f'(1)))\leq 0$ (subcritical or critical case), then
$\P(Z_n>0)$ tends to $0$ as $n\rightarrow \infty$ (see Section \ref{BPRE}).  Thus,
$\E(L)=0$ and $N_n/2^n$ tends to $0$ a.s. as $n\rightarrow\infty$. \\ 

If $\E(\log(f'(1)))> 0$ (supercritical case), then $\P(Z_n>0)$ tends to a positive value, which is equal to $\P(L>0)0$. We complete the proof with the following lemma.
\end{proof}




Let us prove the following zero one law, where $P_n$ is the total number of parasites in generation $n$.
\begin{Lem}
If $\E(\log(f'(1)))> 0$, then  $$\{\lim_{n\rightarrow \infty} N_n/2^n >0\}=\{\forall n \in \N: P_n>0\} \quad \t{a.s.}$$  
\end{Lem}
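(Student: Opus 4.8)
The plan is to prove the inclusion $\{\lim_n N_n/2^n>0\}\subseteq\{\forall n:\,P_n>0\}$ first, and then the reverse inclusion — hence the a.s.\ equality of events — via a bounded martingale over generations. The easy inclusion is immediate: since $Y_0=Y_1=0$, once $P_{n_0}=0$ no cell of a later generation can acquire a parasite, so $N_n=0$ for all $n\geq n_0$ and $\lim_n N_n/2^n=0$. For the converse, write $\mathcal T:=\{\mathbf i\in\TT:\,Z_{\mathbf i}\geq 1\}$ for the infected subtree, so that $N_n=\#(\mathcal T\cap\GG_n)$ and $\{\forall n:\,P_n>0\}$ is the non-extinction event $\{\mathcal T\text{ infinite}\}$, and set $g(k):=\P_k(\lim_n N_n/2^n>0)$. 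Decomposing at the first division — conditionally on the parasite contents $(Z_{\mathbf 0},Z_{\mathbf 1})$ of the two daughter cells of the root, the two induced subprocesses are independent copies of the whole process started from $Z_{\mathbf 0}$ and $Z_{\mathbf 1}$ parasites, and $\lim_n N_n/2^n$ is the half-sum of their two limits — one obtains $1-g(k)=\E_k\big[(1-g(Z_{\mathbf 0}))(1-g(Z_{\mathbf 1}))\big]$ for $k\geq1$, together with $g(0)=0$. Iterating, $1-g(k)=\E_k\big[\prod_{\mathbf i\in\GG_n\cap\mathcal T}(1-g(Z_{\mathbf i}))\big]$, so
$$W_n:=\prod_{\mathbf i\in\GG_n\cap\mathcal T}\big(1-g(Z_{\mathbf i})\big)$$
is a $[0,1]$-valued martingale for the generation filtration; being bounded it converges a.s.\ to some $W_\infty$ with $\E_k[W_\infty]=1-g(k)$.

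Two ingredients feed the conclusion. \textbf{(a)} $g$ is nondecreasing, $\gamma:=g(1)>0$, and $g(k)\to1$ as $k\to\infty$. Monotonicity comes from the obvious coupling in which extra initial parasites add extra descendants, hence extra infected cells. For the rest, recall $\E_k[N_n/2^n]=\P_k(Z_n>0)$ with $(Z_n)$ the random-line BPRE of reproduction law $f$ (no immigration here); since $N_n/2^n$ decreases to $\lim_n N_n/2^n$, monotone convergence gives $\E_k[\lim_n N_n/2^n]=\lim_n\P_k(Z_n>0)=\P_k\big((Z_n)\text{ survives}\big)$, and as $\E(\log f'(1))>0$ this BPRE is supercritical, so its quenched extinction probability is a.s.\ $<1$, whence $\P_k((Z_n)\text{ survives})$ is positive for $k=1$ and tends to $1$ as $k\to\infty$ (see \cite{at}); as $\lim_n N_n/2^n\leq1$ and $g(k)\geq\E_k[\lim_n N_n/2^n]$, this yields (a). \textbf{(b)} On non-extinction, $\sup_n P_n=\infty$ a.s.

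Granting (b), I finish as follows. On extinction, $W_n=1$ for all large $n$ (empty product), so $W_\infty=1$ there. On non-extinction, $\sup_n P_n=\infty$ together with $P_n\leq N_n\cdot\max_{\mathbf i\in\GG_n}Z_{\mathbf i}$ forces $\sup_n N_n=\infty$ or $\sup_n\max_{\mathbf i\in\GG_n}Z_{\mathbf i}=\infty$; since $W_n\leq(1-\gamma)^{N_n}$ and, $g$ being nondecreasing, $W_n\leq1-g(\max_{\mathbf i\in\GG_n}Z_{\mathbf i})$, while $(1-\gamma)^m\to0$ and $1-g(m)\to0$ as $m\to\infty$, in either case $\liminf_n W_n=0$, and $W_n\to W_\infty$ gives $W_\infty=0$ on non-extinction. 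Hence $1-g(k)=\E_k[W_\infty]=\P_k(\text{extinction})$, i.e.\ $\P_k(\lim_n N_n/2^n>0)=\P_k(\forall n:\,P_n>0)$ for every $k$; combined with the easy inclusion, this gives the asserted a.s.\ equality.

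The hard part is ingredient (b): $(P_n)$ is \emph{not} a BPRE, since each generation $n$ carries the $2^n$ independent cell environments $(\f_{\mathbf i})_{\mathbf i\in\GG_n}$, so ``a supercritical branching process conditioned on survival tends to infinity'' is not available off the shelf. I would establish (b) from the fact that the parasite dynamics is uniformly supercritical in log-mean: a parasite in a cell with environment $\f$ has conditional mean total offspring $\mu(\f)=\partial_s\f(1,1)+\partial_t\f(1,1)$ with $\E(\log\mu(\f))\geq\log2+\E(\log f'(1))>\log2$ (apply $\log(a+b)\geq\log2+\tfrac12\log a+\tfrac12\log b$ to $a=\partial_s\f(1,1)$ and $b=\partial_t\f(1,1)$, both a.s.\ positive because $\E\log\partial_s\f(1,1)$ and $\E\log\partial_t\f(1,1)$ are finite under $\E(\log f'(1))>0$), and from the companion fact that a cell with $x$ parasites has each of its daughters infected with probability tending to $1$ as $x\to\infty$. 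On non-extinction $\mathcal T$ has an infinite ray; along it a Borel--Cantelli argument over the i.i.d.\ environments (with a short split according to whether $\P(\f(0,0)>0)>0$ or parasites never die) shows $\P_k(\text{non-extinction},\ \sup_n P_n\leq M)=0$ for every $M$, which is (b).
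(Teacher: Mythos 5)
Your route is genuinely different from the paper's, and its skeleton is sound. You encode the event through $g(k)=\P_k(\lim_n N_n/2^n>0)$ and the bounded martingale $W_n=\prod_{\mathbf{i}\in\GG_n}(1-g(Z_{\mathbf{i}}))$ (uninfected cells contribute a factor $1$ since $g(0)=0$), identify $\E_k[W_\infty]$ with the extinction probability, and combine this with the trivial inclusion; this is the analogue of the $q^{Z_n}$-martingale argument for Galton--Watson processes. Ingredient (a) is fine: monotonicity of $g$ by coupling, $\E_k[N_n/2^n]=\P_k(Z_n>0)$ exactly as in the paper, and $g(1)>0$, $g(k)\to1$ from supercriticality of the cell-line BPRE --- note that $g(k)\to1$ requires the a.s.\ dichotomy for the quenched extinction probability of a supercritical BPRE, a heavier input than anything the paper uses, though standard and correctly attributed. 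The paper proceeds differently: on non-extinction it shows that $N_n\geq K$ at some generation for every $K$ (repeated trials down the infected tree), then picks one parasite in each of these $K$ infected cells and uses $K$ independent subtrees, each of whose rescaled infected-cell limit is positive with probability at least $\E_1(L)>0$, and lets $K\to\infty$; it therefore only needs $\P_1(L>0)>0$ and unboundedness of the number of infected cells, not of the number of parasites.

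The genuine gap is your ingredient (b), which you rightly flag as the hard point but justify by a sketch that does not hold up. An infinite ray of the infected tree is selected using the whole process, so the environments along it are not an i.i.d.\ sequence to which Borel--Cantelli can be applied (they are biased toward environments that keep the line infected); the ``companion fact'' that a cell with $x$ parasites infects each daughter with probability tending to $1$ is false in general, since $\P(f^{(0)}(0)=1)$ may be positive (some environments may send all offspring to the second daughter), so that probability tends to $1-\P(f^{(0)}(0)=1)$; and the log-mean computation concerns one environment per generation, whereas parasites in distinct cells of generation $n$ see independent environments --- which is precisely why $(P_n)_{n\in\N}$ is not a BPRE. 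Fortunately (b) is true and has a short proof along the case split you mention, needing supercriticality only to exclude a degenerate case. If $\P(\f(0,0)>0)>0$, then from any configuration carrying at most $M$ parasites (hence at most $M$ infected cells, each with at most $M$ parasites, with independent environments) the conditional probability that all parasites die at the next division is at least $\E\big(\f(0,0)^M\big)^M>0$, whence $\P_k(\forall n:\ 1\leq P_n\leq M)=0$ for every $M$, i.e.\ $\sup_n P_n=\infty$ a.s.\ on non-extinction. If instead $\f(0,0)=0$ a.s., then $P_n$ is nondecreasing, and unless each parasite has exactly one offspring a.s.\ --- which would force $\E(\log f'(1))=\tfrac12\E\big(\log(p(1-p))\big)\leq-\log 2<0$ with $p=\partial_s\f(1,1)$, contradicting supercriticality --- a fixed parasite has at least two offspring with a fixed positive probability at every step, so $\P(P_{n+1}>P_n\mid \mathcal{F}_n)$ is bounded below and the conditional Borel--Cantelli lemma gives $P_n\to\infty$ a.s. With (b) established this way, your proof is complete.
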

\begin{proof}
First, we prove that conditionally on non-extinction of parasites, for every $K \in \N$, there
exists a.s. a generation $n$ such that $N_n\geq K$. Letting $K \in \N$, we fix $p$ as the first integer such that $2^p\geq K$. Then 
$q:=\P_1(N_p\geq K)>0$ since $\P(N_1=2)>0$. \\
Either the number of infected cells in generation $p$ is more than $K$, 
which happens with probability $q$, or we can choose in generation
$p$ an infected cell $\b{i}(1)$, since parasites have not died out. Then, with probability larger than $q$ ,
the number of infected cells in generation $p$ of  the subtree
rooted in this cell $\b{i}(1)$ contains more than $K$ parasites. Note that this
probability is exactly equal to $q$ iff the infected cell $\b{i}(1)$ contains one single
parasite. Recursively, we find a.s. a generation $n$ such than $N_n\geq K$. \\

Then, recalling that we still work conditionally on  non-extinction of parasites, the stopping time 
$T:=\inf\{n \in \N: N_n\geq K\}<\infty$ a.s. We now also condition 
by $T=n$ and $N_T=k$. We can then choose one parasite in every infected cell  in generation $n$, which we label by  $1\leq i \leq k$ and we denote   by $N_p^{(i)}$ the
number of cells in generation $n+p$ infected by parasites  whose ancestor in
generation $n$ is the parasite $i$. By branching property,  the integers $(N_p(i):  \ 1\leq i\leq k)$ are i.i.d.
and $N_p^{(i)}/2^p\rightarrow L^{(i)}$ as $p\rightarrow \infty$,
where $(L^{(i)}: \ 1\leq i\leq k)$ are independent and
$\P(L^{(i)}>0)= \P(L>0)>0$ for every $1\leq i \leq k$. Using that
$$N_{n+p}\geq \sum_{i=1}^k N_p^{(i)} \qquad  \t{a.s.},$$
and as $k\geq K$, we get
$$\lim_{p\rightarrow \infty} N_{n+p}/2^p\geq \max(L^{(i)}: \ 1\leq i\leq K) \quad \t{a.s.} $$
As $\sup(L^{(i)}: \ i\in\N)=\infty $ a.s., letting $K\rightarrow \infty$ 
ensures that a.s. $N_p/2^p$ does not tend to $0$.
\end{proof}
$\newline$
\subsection{Asymptotics with contamination in the case $\E(\log(f'(1)))<0$ and $\max(\E(\log^+(Y_i)): i=0,1)<\infty$.}
\label{cont}

Define $F_k(n)$ the  proportion
  of cells with $k$ parasites in generation $n$:
$$F_k(n):= \frac{ \# \{ \mathbf{i}\in \GG_n:
Z_{\mathbf{i}}=k\}}{2^n} \qquad (k\in\N).$$
We introduce the Banach  space $l^1(\N)$ and  the subset of
frequencies $\SS^1(\N)$ which we endow with  the norm $\parallel
.\parallel _1$ defined by:
$$l^1(\N):=\{(x_i)_{i\in\N}: \sum_{i=0}^{\infty} \vert x_i\vert <\infty\},
\qquad \parallel (x_i)_{i\in\N} \parallel _1=\sum_{i=0}^{\infty} \vert x_i\vert,$$
$$\SS^1(\N):=\{(f_i)_{i\in\N}: \forall \ i \in \N, \  f_i \in \RRR^+, \ \sum_{i=0}^{\infty} f_i=1\}. $$
The main argument here is the law of large number proved by Guyon
\cite{guyon} for asymmetric Markov chains indexed by a tree. 
$\newline$
 
\begin{Thm}
If $\E(\log(f'(1)))<0$ and $\max(\E(\log^+(Y_i)): i=0,1)<\infty$,
then $(F_k(n))_{k\in\N}$ converges in probability in $\SS^1(\N)$  to
a deterministic sequence  $(f_k)_{k\in\N} $ as $n\rightarrow
\infty$, such that $f_0>0$ and $\sum_{k=0}^{\infty} f_k=1$.
Moreover, for every $k\in\N$, $f_k=\P(Z_{\infty}=k)$.
\end{Thm}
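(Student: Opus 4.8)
The plan is to apply Guyon's law of large numbers for Markov chains indexed by a binary tree to the process $(Z_{\mathbf{i}})_{\mathbf{i}\in\TT}$, and to identify the limiting empirical measure with the stationary distribution $Z_\infty$ coming from Theorem \ref{cvrandline}. First I would check that $(Z_{\mathbf{i}})_{\mathbf{i}\in\TT}$ is indeed a Markov chain indexed by $\TT$ in the sense of \cite{guyon}: conditionally on $Z_{\mathbf{i}}=x$, the pair $(Z_{\mathbf{i}0},Z_{\mathbf{i}1})$ has the law described in part III of the Introduction, and these transition kernels are i.i.d.\ across $\mathbf{i}$; the subtrees rooted at $\mathbf{i}0$ and $\mathbf{i}1$ are conditionally independent given $Z_{\mathbf{i}}$, which is exactly the branching Markov structure Guyon requires (asymmetry and dependence between the two offspring values being allowed). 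The associated ``random lineage'' chain obtained by following the i.i.d.\ sequence $(a_i)$ down the tree is precisely the BPRE with state-dependent immigration $(Z_n)_{n\in\N}$ of Section \ref{randline}.

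Second, I would verify the ergodicity hypothesis of Guyon's theorem for the lineage chain. Condition \eqref{cond} gives $\P(Y_0=0)>0$, hence $\P_0(Z_1=0)>0$, so $0$ is accessible and the chain on $\N$ is irreducible and aperiodic; under the standing assumptions $\E(\log(f'(1)))<0$ and $\max(\E(\log^+(Y_i)):i=0,1)<\infty$, Theorem \ref{cvrandline}(i) gives that $Z_n$ converges in distribution to a finite r.v.\ $Z_\infty$ with $\P(Z_\infty=0)>0$, independent of the starting state, so the lineage chain is positive recurrent with stationary law $\mu(k)=\P(Z_\infty=k)$. This is the ``fundamental assumption'' flagged in the Introduction that makes Guyon's ergodic hypothesis applicable. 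Applying his law of large numbers to the test function $\ind_{\{\cdot=k\}}$ then yields
$$
F_k(n)=\frac{1}{2^n}\sum_{\mathbf{i}\in\GG_n}\ind_{\{Z_{\mathbf{i}}=k\}}\xrightarrow[n\to\infty]{}\P(Z_\infty=k)=:f_k\qquad\text{in probability},
$$
for each fixed $k$, with $f_0=\P(Z_\infty=0)>0$ and $\sum_k f_k=1$ since $Z_\infty$ is a.s.\ finite.

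Third, I would upgrade the coordinatewise convergence to convergence in the stronger topology of $\SS^1(\N)\subset l^1(\N)$. Since $(F_k(n))_{k\in\N}$ and $(f_k)_{k\in\N}$ both lie in the simplex $\SS^1(\N)$ (each has nonnegative entries summing to $1$), Scheffé's lemma applies: coordinatewise convergence of probability mass functions together with equality of total masses forces $\sum_k|F_k(n)-f_k|\to 0$. To make this rigorous in probability rather than a.s., I would use the tightness provided by Lemma \ref{lem}, namely $\sup_{n}\P_0(Z_n\ge l)\to 0$ as $l\to\infty$: given $\delta>0$ pick $L$ with $\sup_n\P_0(Z_n>L)<\delta$ and also $\sum_{k>L}f_k<\delta$ (possible since $\E F_k(n)=\P(Z_n=k)$ and $\sum_{k>L}\P(Z_n=k)=\P(Z_n>L)$), bound the $l^1$ tail $\sum_{k>L}|F_k(n)-f_k|$ in probability by a Markov-inequality argument on $\sum_{k>L}F_k(n)$, and control the finite head $\sum_{k\le L}|F_k(n)-f_k|$ by the coordinatewise convergence of step two. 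The main obstacle is this last step: one has to pass from a pointwise-in-$k$ statement to a uniform-in-$k$ ($l^1$) statement while only having convergence in probability and no a.s.\ control, so the tightness estimate of Lemma \ref{lem} (applied to the empirical measure, not just to a single lineage) together with a careful Scheffé-type/truncation argument is the real content beyond invoking \cite{guyon}; everything else is a matter of checking that the model fits the hypotheses of the cited theorems.
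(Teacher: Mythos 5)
Your proposal follows essentially the same route as the paper: the paper's proof consists precisely of observing that $(Z_{\i})_{\i\in\TT}$ is a bifurcating Markov chain in Guyon's framework and that the ergodicity of the random cell line furnished by Theorem \ref{cvrandline} (i) allows one to apply Theorem 8 of \cite{guyon} directly, with the limit identified as the law of $Z_\infty$. Your additional Scheff\'e/truncation step for upgrading coordinatewise convergence to convergence in $\SS^1(\N)$ is correct but is detail the paper leaves implicit (and is in fact easier than you fear: since both $(F_k(n))_k$ and $(f_k)_k$ lie in the simplex, coordinatewise convergence in probability yields $\ell^1$ convergence in probability by a subsequence-plus-Scheff\'e argument, without needing the tightness estimate of Lemma \ref{lem}).
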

\begin{proof}Recall that $(Z_{\i})_{\i\in\TT}$ is a Markov chain indexed by a tree and we are in the framework of bifurcating Markov chain studied in \cite{guyon}. Thanks
to the ergodicity of the number of parasites in a random cell line
proved in the previous section (Theorem \ref{cvrandline} (i)), we can directly apply Theorem 8 in
\cite{guyon} to get the convergence of proportions
of cells with a given number of parasites.
\end{proof}
$\newline$
But it seems that we can't apply Theorem 14 or
Corollary 15 in \cite{guyon} to get a.s. convergence of proportions, because of the term $k^{\epsilon}$ in estimation of Theorem \ref{cvrandline}. For examples, we refer to the previous proposition. \\ \\


Using again \cite{guyon}, we can prove also a law of large
 numbers and a  central limit theorem for the proportions of
 cells with given number of parasites before generation $n$. Define,
 for every $n\in\N$,
$$P_k(n):=\frac{ \# \{ \mathbf{i}\in \cup_{0\leq i\leq n} \GG_i:
Z_{\mathbf{i}}=k\}}{2^{n+1}} \qquad (k\in\N).$$
\begin{Thm}
If $\E(\log(f'(1)))<0$ and $\max(\E(\log^+(Y_i)): i=0,1)<\infty$, then
$(P_k(n))_{k\in\N}$
converges in probability in $\SS^1(\N)$  to the deterministic sequence  $(f_k)_{k\in\N} $ as $n\rightarrow \infty$. \\
Moreover for every $k\in\N$, $\sqrt{n}(P_k(n)-f_k)$ converges in
distribution to a centered normal  law as $n\rightarrow \infty$, with a non explicit variance.
\end{Thm}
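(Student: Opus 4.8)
The plan is to deduce this theorem from Guyon's results on bifurcating Markov chains, in exactly the same spirit as the previous theorem. Recall that $(Z_{\i})_{\i\in\TT}$ is a Markov chain indexed by the binary tree $\TT$, and the transition kernel $P$ is determined by the model: conditionally on $Z_{\i}=x$ and the environment $\f_{\i}=\g$, the pair $(Z_{\i0},Z_{\i1})$ is the sum of $x$ i.i.d. couples with p.g.f.\ $\g$ and the independent contamination couple $(Y^{(0)}_x(\i),Y^{(1)}_x(\i))$. The marginal motion along a random line down the tree is precisely the process $(Z_n)_{n\in\N}$ studied in Section \ref{randline}, which under the hypotheses $\E(\log(f'(1)))<0$ and $\max(\E(\log^+(Y_i)):i=0,1)<\infty$ is, by Theorem \ref{cvrandline} (i), ergodic with stationary distribution $\mu$ given by $\mu(\{k\})=\P(Z_{\infty}=k)$, and $\mu(\{0\})>0$. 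This ergodicity is the "fundamental assumption" alluded to in the introduction and is exactly what powers Guyon's law of large numbers.

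First I would set up the correspondence with the averaged/empirical quantities in \cite{guyon}. For a test function $g$ on $\N$, Guyon's Theorem 8 gives the convergence in probability
$$
\frac{1}{2^{n+1}}\sum_{j=0}^{n}\sum_{\i\in\GG_j} g(Z_{\i})
\;\xrightarrow[n\to\infty]{}\; \mu(g)=\sum_{k\in\N} g(k)\,\P(Z_{\infty}=k),
$$
and his Theorem 11 (or 14) provides the $\sqrt{n}$-rescaled central limit theorem for the same additive functional, with an asymptotic variance expressed through the resolvent of $P$; the variance is not explicit here because $P$ itself is not explicit. Taking $g=\ind_{\{k\}}$ immediately yields $P_k(n)\to f_k:=\P(Z_{\infty}=k)$ in probability for each fixed $k$, and $\sqrt{n}(P_k(n)-f_k)$ converging to a centered Gaussian. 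The convergence in $\SS^1(\N)$ then follows from the fixed-$k$ convergences together with a tightness-type argument: by Theorem \ref{cvrandline} (i) (or Lemma \ref{lem}) the family of laws of $(Z_n)$ is tight uniformly in $n$, hence $\sup_n \P_0(Z_n\geq l)\to 0$ as $l\to\infty$, which transfers to a uniform-in-$n$ smallness of $\sum_{k\geq l}P_k(n)$ by the same first-moment computation as in Proposition \ref{propsscont}; combined with pointwise convergence and Scheffé-type reasoning this upgrades to convergence in the $\|\cdot\|_1$-norm on $\SS^1(\N)$.

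Concretely I would proceed in the following order. (1) Check that the model indeed satisfies the hypotheses of Guyon's bifurcating-Markov-chain framework: measurable transition kernel, the ergodicity of the tagged-lineage chain, and — for the CLT — whatever moment condition Guyon requires on $g$; since $g=\ind_{\{k\}}$ is bounded this is automatic. (2) Invoke Guyon's LLN (Theorem 8) to get $P_k(n)\to f_k$ in probability, identifying $f_k=\P(Z_{\infty}=k)$ via the stationary law of the tagged lineage; note $\sum_k f_k=1$ and $f_0=\mu(\{0\})>0$. (3) Prove the uniform tail bound $\sup_n\sum_{k\geq l}\E(P_k(n))\to 0$, using $\E(\ind_{Z_{\i}=k})=\P(Z_{|\i|}=k)$ so that $\E(\sum_{k\ge l}P_k(n))\le \sup_{j}\P_0(Z_j\geq l)$, which goes to $0$ by Lemma \ref{lem}; deduce convergence in $l^1$. (4) Invoke Guyon's CLT (Theorem 11/14) with $g=\ind_{\{k\}}$ to obtain the $\sqrt{n}$ Gaussian limit, and simply record that the variance is the one coming out of Guyon's formula, which is non-explicit because the kernel is non-explicit. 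The main obstacle is really bookkeeping: one must verify carefully that Guyon's precise hypotheses (in particular whatever form of geometric ergodicity or $L^2$-ergodicity his CLT needs) are met here — and for the CLT this is the delicate point, since Theorem \ref{cvrandline} only gives a convergence rate with a spurious $k^{\epsilon}$ factor, exactly the same obstruction that, as remarked just above, blocks the almost-sure version of the previous theorem. For the bounded functional $g=\ind_{\{k\}}$, however, the $k^{\epsilon}$ term is harmless and Guyon's $L^2$/mixing conditions hold, so the CLT goes through; the $k^{\epsilon}$ problem would only bite if one wanted the CLT simultaneously in $\SS^1(\N)$, which is not claimed.
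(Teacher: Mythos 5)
Your proposal follows essentially the same route as the paper: the paper's proof simply combines the ergodicity given by Theorem \ref{cvrandline} (i) with Guyon's Theorem 8 for the law of large numbers, and for the fluctuations invokes Guyon's central limit theorem (his Theorem 19, applied with $F$ the set of continuous functions with values in $[0,1]$, which covers $g=\ind_{\{k\}}$). The only discrepancy is your attribution of the CLT to Guyon's Theorems 11/14 — those are the almost-sure convergence results, which the paper explicitly notes cannot be applied here because of the $k^{\epsilon}$ factor — but this mislabeling does not affect the substance of your argument.
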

\begin{proof}
Use again Theorem \ref{cvrandline} (i) and  Theorem 8 in \cite{guyon}
to prove the law of large numbers. For the central limit theorem, use  Theorem 19 in
\cite{guyon} by letting $F$ be the set of continuous functions
taking values in $[0,1]$.
\end{proof}

$\newline$
\subsection{Asymptotics with contamination in the case $\E(\log(f'(1)))\geq 0$ or
$\max(\E(\log^+(Y_i)): i=0,1)=\infty$.} 
\label{supercont}
In this case, cells become
infinitely infected as the generation tends to infinity.
\begin{Thm} If $\E(\log(f'(1)))\geq 0$ or
$\max(\E(\log^+(Y_i)): i=0,1)=\infty$,
for every $k\in\N$, then $F_k(n)$ tends to zero as $n\rightarrow \infty$.
That is, for very $K \in \N$,
$$ \lim_{n\rightarrow \infty} \#\{ \i \in \GG_n: Z_{\i}\geq K\}/2^n \stackrel{\P}{=}1.$$
\end{Thm}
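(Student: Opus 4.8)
The plan is to reduce everything to a first--moment estimate combined with Theorem \ref{cvrandline}(ii). First I would compute $\E(F_k(n))$ for a fixed $k\in\N$, exactly as in the proof of Proposition \ref{propsscont}: since $(a_i)_{i\in\N}$ is independent of the tree--indexed chain $(Z_{\b{i}})_{\b{i}\in\TT}$ and $(a_1,\dots,a_n)$ is uniformly distributed on $\GG_n$,
$$\E\big(F_k(n)\big)=\frac{1}{2^n}\sum_{\b{i}\in\GG_n}\P(Z_{\b{i}}=k)=\sum_{\b{i}\in\GG_n}\P\big((a_1,\dots,a_n)=\b{i}\big)\,\P(Z_{\b{i}}=k)=\P(Z_n=k),$$
where $(Z_n)_{n\in\N}$ is the branching process in random environment with state--dependent immigration studied in Section \ref{randline}, started from $0$.

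Then, under the hypothesis $\E(\log(f'(1)))\geq 0$ or $\max(\E(\log^+(Y_i)):i=0,1)=\infty$, Theorem \ref{cvrandline}(ii) gives $Z_n\rightarrow\infty$ in probability, hence $\P(Z_n=k)\rightarrow 0$ as $n\rightarrow\infty$ for every $k\in\N$; therefore $\E(F_k(n))\rightarrow 0$. Since $0\leq F_k(n)\leq 1$, this is convergence in $L^1(\P)$, and in particular $F_k(n)\rightarrow 0$ in probability, which is the first assertion.

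For the equivalent formulation with $K$, I would simply write, for fixed $K\in\N$,
$$\frac{\#\{\b{i}\in\GG_n:\ Z_{\b{i}}\geq K\}}{2^n}=1-\sum_{k=0}^{K-1}F_k(n),$$
a finite sum; since each $F_k(n)\rightarrow 0$ in probability, the right--hand side converges to $1$ in probability, which is the claim.

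I do not expect any genuine obstacle: the whole difficulty has already been absorbed into Theorem \ref{cvrandline}(ii), and the rest is the elementary exchange of expectation and (finite) sum together with the uniform law of the random cell line. The only point worth stating explicitly is that convergence holds coordinate by coordinate (for each fixed $k$, equivalently jointly on any finite set of values of $k$), which is exactly what is needed here, and that --- unlike the situation of Section \ref{cont} --- there is no need for, and no claim of, a limiting frequency sequence summing to one.
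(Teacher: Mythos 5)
Your proposal is correct and follows essentially the same route as the paper: a first-moment computation via Fubini and the uniform random cell line, giving $\E(F_k(n))=\P(Z_n=k)$ (the paper works directly with $\P(Z_n\geq K)$), then Theorem \ref{cvrandline}(ii) and the boundedness of the proportions to conclude convergence in $L^1$, hence in probability. The only cosmetic difference is that you prove the statement coordinate by coordinate and sum over $k<K$, whereas the paper handles the event $\{Z_{\i}\geq K\}$ in one step; the two are trivially equivalent.
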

\begin{proof} By Fubini's theorem, we have
\Bea
\E\big[\#\{ \i \in \GG_n: Z_{\i}\geq K\}/2^n\big]&=&\sum_{i\in\GG_n} \P(Z_{\i}\geq K)/2^n \\
&=& \sum_{i\in\GG_n} \P((a_0,...a_{n-1})=\i)\P(Z_{\i}\geq K) \\
&=&\P(Z_n\geq K).
\Eea
By Theorem \ref{cvrandline}, $\P(Z_n\geq K)$ tends to $1$, then
$1-\#\{ \i \in \GG_n: Z_{\i}\geq K\}/2^n $ converges to $0$ in $L^1$, which  gives the result.
\end{proof}
$\newline$
\section{Asymptotics for the number of parasites}
\label{nbparasit}
We assume here that parasites multiply following a Galton Watson process with deterministic  mean $m$, independently of the cell they belong to. That  is, $s\mapsto  \f(s,s)$ is deterministic and every parasite multiply independently with the reproduction law whose p.g.f. is equal to $g : s\mapsto \f(s,s)$. Moreover we assume that
contamination of a cell does not depend on the number of enclosed parasites. That is
$$Y\stackrel{d}{=}Y_0\stackrel{d}{=}Y_1.$$

Set $P_n$ the number of parasites in generation $n$.  Without contamination, in the supercritical case $m>1$,  it is well know that either $P_n$ becomes extinct or $P_n/m^n$ converges to a positive finite random variable. In the presence of contamination, we have the following result.
\begin{Pte}
If $\E(Y)<\infty$ and $\P(Y_0>0)>0$, then $\log(P_n)/n$ converges in $\P$
 to $\log(\max(2,m))$.
\end{Pte}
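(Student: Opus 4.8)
The plan is to track the total number of parasites $P_n$ as the superposition of two effects: the exponential growth $m^n$ coming from parasite multiplication inside a cell line, and the factor $2^n$ coming from the number of cells, each of which receives fresh immigrants. Heuristically, a batch of immigrants that arrives at generation $n-k$ in some cell contributes, by generation $n$, roughly $m^k$ parasites which are then spread over up to $2^k$ descendant cells; summing the expected contributions over all cells and all $k$ gives $\E(P_n) \asymp \sum_{k=0}^n 2^k \cdot m^{n-k} \E(Y)$ (plus the contribution $m^n$ of the initial parasites when $k=0$ parasites are present — here $k=0$ so this term is absent, but contamination in generation $0$ or $1$ replaces it). This behaves like $\max(2,m)^n$ up to polynomial factors, which already suggests the answer $\log(\max(2,m))$ for $\log(P_n)/n$.

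**Upper bound.** For the upper bound I would work directly with expectations. Since $Y \stackrel{d}{=} Y_0 \stackrel{d}{=} Y_1$ and contamination is independent of the multiplication mechanism, one gets a clean recursion: writing $P_{n+1}$ as the sum over cells in $\GG_n$ of (offspring of enclosed parasites) plus (two immigration batches for the two daughters), and using $\E$(offspring of one parasite over one generation)$=m$ together with $\#\GG_n = 2^n$, I obtain $\E(P_{n+1}) = m\,\E(P_n) + 2^{n+1}\E(Y)$. Solving this linear recursion gives $\E(P_n) = \sum_{j=1}^{n} m^{n-j} 2^{j} \E(Y) \le n\,\max(2,m)^n\,\E(Y)$, hence by Markov's inequality $\P(P_n \ge \max(2,m)^{n(1+\varepsilon)}) \to 0$, which yields $\limsup \log(P_n)/n \le \log(\max(2,m))$ in probability.

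**Lower bound.** This is the delicate half and I expect it to be the main obstacle. The bound $P_n \ge m^n$-ish requires surviving parasite lineages, and $P_n \ge 2^n$-ish requires many cells to be independently (re)contaminated. For the $m^n$ part: condition on the event that a contamination event in generation $0$ or $1$ deposits at least one parasite in some cell — this has positive probability since $\P(Y_0>0)>0$ — and then, along the random cell line, the enclosed parasites evolve (ignoring sharing losses to the sister line is not allowed, so instead) follow a supercritical Galton–Watson process of mean $m$ conditioned on the positive-probability survival event, so that with probability bounded below $P_n \gtrsim (m-\delta)^n$; combined with a law-of-large-numbers / Borel–Cantelli argument over the tree (there are exponentially many cells that could have caught an early infection and launched such a lineage, so the survival event happens somewhere with probability tending to $1$), one upgrades this to "in probability". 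For the $2^n$ part: since $\P(Y_0>0)>0$, a positive fraction $p>0$ of cells get freshly infected at each generation even starting from empty cells; a second-moment or bifurcating-ergodic argument (one could invoke the law of large numbers for Markov chains indexed by trees as used elsewhere in the paper, applied to the indicator of being infected, even though here the chain is not ergodic because $Z_n\to\infty$) shows $\#\{\i \in \GG_n : Z_{\i} > 0\} \ge p\,2^{n}$ with high probability, and each such cell carries at least one parasite, giving $P_n \gtrsim 2^n$. Taking the better of the two lower bounds gives $\liminf \log(P_n)/n \ge \log(\max(2,m))$ in probability, and combining with the upper bound finishes the proof. The main work is making the "survival somewhere in the tree" and "positive fraction infected" statements quantitative enough to pass from positive probability to convergence in probability; I would handle the first by a direct Borel–Cantelli over disjoint subtrees and the second by a variance computation exploiting that two cousins' infection statuses become asymptotically independent.
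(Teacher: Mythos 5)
Your proposal is correct and follows essentially the same route as the paper: a first-moment bound ($\E(P_n)=\E(Y)\sum_{i=1}^{n}2^{i}m^{n-i}$, which you obtain via your recursion) plus Markov's inequality for the upper bound, and a two-part lower bound combining the $2^n$ contamination breadth with a surviving mean-$m$ Galton--Watson parasite lineage, upgraded to convergence in probability through the many independent founders available in the tree. The paper's execution of the lower bound is marginally simpler -- it bounds $P_n$ below by the immigrants freshly arrived in generation $n$ (weak law of large numbers over the $2^n$ cells) and uses $P_p\to\infty$ in probability to guarantee a surviving lineage, instead of your infected-cell count and Borel--Cantelli over disjoint subtrees -- but these are the same ideas.
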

\begin{proof}
First, we prove the lower bound. This is a  consequence of the fact that $P_n$ is larger than\\
(i) the total number of parasites $P_n^1$  which contaminate  cells of generation $n$, \\ 
(ii) the number of parasites $P_n(p)$ in generation $n$ with the same given parasite ancestor in generation $p$. 

Indeed, first $P_n^1$ is the sum of $2^n$ i.i.d. random variables with mean $\E(Y)$, so law of large numbers ensures that 
$$P_n^1/2^n \stackrel{n\rightarrow \infty}{\longrightarrow } \E(Y)>0 \quad \t{in} \ \P$$
Then, since $P_n\geq P_n^1$ a.s. for every $n\in\N$, 
\be
\label{Pninf}
P_n  \stackrel{n\rightarrow \infty}{\longrightarrow } \infty \quad \t{in} \ \P.
\ee
Moreover for every $p<n$,
\be 
\label{renormalisation}
P_n(p)/m^{n-p}\stackrel{n\rightarrow \infty}{\longrightarrow } W, \qquad \t{a.s.},
\ee
with $\P(W>0)>0$. Let now $P_n^2$ be the sum of the number of descendants in generation $n$ of each parasite of generation $p$. We get then the sum of $P_p$ i.i.d. quantities distributed as $P_n(p)$. Then
$(\ref{Pninf})$ and (\ref{renormalisation}) ensure
that we can choose $p$ such that 
$$P_n^2/m^{n-p}  \stackrel{n\rightarrow \infty}{\longrightarrow } W' \qquad \t{a.s.}$$
with $\P(W'>0)\geq 1-\epsilon$.

 Using that $N_n$ is larger than $P_n^1$ and $P_n^2$ ensures that for every $\epsilon>0$,
$$\limsup_{n\rightarrow \infty} \P(\log(P_n)/n\leq \log(\max(2,m)))< \epsilon.$$
Letting $\epsilon\rightarrow 0$ gives the lower bound. \\

Second, we prove the upper bound. Note that the total number of parasites in generation $n$ can be written as
$$P_n=\sum_{i=1}^n \sum_{j=1}^{2^i} \sum_{k=1}^{Y^{i,j}} Z^{i,j}_k,$$
where $Y^{i,j}$ is the number of parasites which contaminate the $j$th cell of generation $i$, and labeling by
$1\leq k\leq Y^{i,j}$ these parasites, $Z^{i,j}_k$
is the number of descendants in generation $n$ of the $k$th parasites. \\
Moreover $(Y^{i,j}:  i \in \N, j \in \N)$ are identically distributed and independent  of $(Z^{i,j}_p(k),i \in \N, j \in \N, \ k \in \N)$,  $(Z^{i,j}_k,i \in \N, j \in \N, \ k \in \N)$  are independent
and $Z^{i,j}_p(k)$  is the population of a Galton  Watson process in   generation $n-i$ with offspring probability generation function equal to $g$. Thus
\Bea
\E(P_n)&=& \sum_{i=1}^n \sum_{j=1}^{2^i} \E(\sum_{k=1}^{Y^{i,j}} Z^{i,j}_k) \\
&=& \sum_{i=1}^n \sum_{j=1}^{2^i} \E(Y^{i,j}) \E( Z^{i,j}_k) \\
&=&  \E(Y) \sum_{i=1}^n \sum_{j=1}^{2^i} m^{n-i} \\
&=& 2\E(Y) \frac{m^n-2^n}{m-2} \quad \t{if} \ m\ne 2.
\Eea
If $m=2$, then $\E(P_n)=\E(Y)nm^n$.
This gives the upper bound by Markov inequality and completes the proof.
\end{proof}

\end{document}